\numberwithin{equation}{section}
\theoremstyle{plain}
\newtheorem*{theorem*}{Theorem}
\newtheorem{theorem}[equation]{Theorem} 
\newtheorem{corollary}[equation]{Corollary} 
\newtheorem{lemma}[equation]{Lemma}
\newtheorem{proposition}[equation]{Proposition}
\theoremstyle{definition}
\newtheorem{definition}[equation]{Definition} 
\newtheorem{example}[equation]{Example}
\newtheorem{hypothesis}[equation]{Hypothesis} 
\newtheorem{remark}[equation]{Remark}
\DeclareMathOperator\Aut{Aut}
\DeclareMathOperator\Gr{GrMod}
\DeclareMathOperator\id{id}
\DeclareMathOperator\reg{reg}
\DeclareMathOperator\Span{Span}
\newcommand\CC{\mathbb C}
\newcommand\NN{\mathbb N}
\newcommand\ZZ{\mathbb Z}
\newcommand\cA{\mathcal A}
\newcommand\cC{\mathcal C}
\newcommand\cI{\mathcal I}
\newcommand\cJ{\mathcal J}
\newcommand\fs{\mathfrak s}
\newcommand\be{\mathbf e}
\newcommand\bu{\mathbf u}
\newcommand\bv{\mathbf v}
\newcommand\bzero{\mathbf 0}
\newcommand\kk{\Bbbk}
\newcommand\cnt{\mathcal Z}
\renewcommand{\int}{\mathrm{int}}
\newcommand\inv{^{-1}}
\newcommand\iso{\cong}
\newcommand\tensor{\otimes}
\newcommand\restrict[1]{\raisebox{-.3ex}{$|$}_{#1}}
\renewcommand{\to}{\ensuremath{\longrightarrow}}
\newcommand\gt{\chi}
\newcommand\lt{{}^\gt\!}
\newcommand\ltA{{}^\gt\! A}
\newcommand\ltC{{}^\gt C}
\newcommand\ct{\mathbf c}
\title{Twists of twisted generalized Weyl algebras}
\author[Gaddis]{Jason Gaddis}
\address{Department of Mathematics, Miami University, Oxford, Ohio 45056} 
\email{gaddisj@miamioh.edu}
\author[Rosso]{Daniele Rosso}
\address{Department of Mathematics and Actuarial Science, Indiana University Northwest, Gary, IN 46408} 
\email{drosso@iu.edu}
\subjclass[2020]{
16W50, 
16S32, 
16W22, 
16D90 
}
\keywords{Twisted generalized Weyl algebra, twisted tensor product, graded twist, cocycle twist, graded modules}
\begin{document}

\begin{abstract}
We study graded twisted tensor products and graded twists of twisted generalized Weyl algebras (TGWAs). We show that the
class of TGWAs is closed under these operations assuming mild hypotheses. We generalize a result on cocycle equivalence amongst multiparameter quantized Weyl algebras to the setting of TGWAs.
As another application we prove that certain TGWAs of type $A_2$ are noetherian.
\end{abstract}

\maketitle

\section{Introduction}

The Weyl algebras $A_n(\CC)$ are fundamental objects in noncommutative algebra. It is well-known that $A_2(\CC) \iso A_1(\CC) \tensor A_1(\CC)$. More generally, if $A$ and $B$ are two rank one generalized Weyl algebras (GWAs), then $A \tensor B$ is a rank two GWA. 
There is a notion of a \emph{twisted generalized Weyl algebras} (TGWAs), 
and under suitable hypotheses, the tensor product of two TGWAs is again a TGWA \cite{GR}. In this work we extend the above results to the case of graded \emph{twisted tensor products}. The definition of this is given in Section \ref{sec.twistedtensor}.

\begin{theorem*}[Theorem \ref{thm.twtensor}]
Let $A$ and $B$ be regular, consistent TGWAs. 
Let $\tau:B \tensor A \to A \tensor B$ be a twisting map satisfying certain conditions 
(see Hypothesis \ref{hyp.tgwa} and \eqref{eq.rhosigma-2}).
Then $A \tensor_\tau B$ is a regular, consistent TGWA. 
\end{theorem*}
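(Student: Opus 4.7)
The plan is to construct the TGWA data for $A \otimes_\tau B$ directly from the TGWA data of $A$ and $B$ and then verify the axioms one by one, mimicking the proof in \cite{GR} for the ordinary tensor product but with the twisting map $\tau$ inserted wherever factors from $A$ and $B$ are commuted past one another.

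First, I would fix TGWA data $(R_A, \sigma^A, t^A, \mu^A)$ of rank $m$ for $A$ and $(R_B, \sigma^B, t^B, \mu^B)$ of rank $n$ for $B$, and propose rank $m+n$ data for $A \otimes_\tau B$ whose base ring is the (restricted) twisted tensor product of the base rings $R_A$ and $R_B$, whose automorphisms are natural extensions of the $\sigma^A_i$ and $\sigma^B_j$, and whose defining elements are $t^A_i \otimes 1$ and $1 \otimes t^B_j$. Hypothesis \ref{hyp.tgwa} should be precisely what guarantees that $\tau$ restricts to a well-defined twisting map on the base rings, so that $R_A \otimes_\tau R_B$ is a commutative subalgebra of $A \otimes_\tau B$ and that the extended automorphisms are well-defined and pairwise commuting.

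Next, I would verify the Mazorchuk-Turowska/Futorny-Hartwig consistency conditions for the combined data. For index pairs both from $A$ (resp.\ both from $B$), these are inherited from the consistency of $A$ (resp.\ $B$). For mixed index pairs the required identities should follow from \eqref{eq.rhosigma-2}, which I expect encodes exactly the compatibility between $\tau$ and the automorphism-and-element data of the two TGWAs. Regularity of the combined data then follows from regularity of $A$ and $B$: the defining elements remain regular in $R_A \otimes_\tau R_B$ since $\tau$ is a $\kk$-linear isomorphism and therefore introduces no zero divisors.

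Finally, I would identify the TGWA built from this combined data with $A \otimes_\tau B$ itself. One direction is essentially tautological: the proposed TGWA relations hold in $A \otimes_\tau B$, giving a surjection from the former onto the latter. The reverse inclusion, that this surjection is an isomorphism, should follow from regularity and consistency of $A$ and $B$ together with injectivity of the underlying twisted multiplication on $A \otimes B$. The main obstacle I anticipate is the mixed-index consistency check, where one must carefully unravel how $\tau$ intertwines the automorphism actions and the defining elements $t^A_i$, $t^B_j$; this is where \eqref{eq.rhosigma-2} will have to be invoked in a genuinely nontrivial way, and keeping track of the resulting bookkeeping, rather than any single deep idea, is what I expect to be the principal difficulty.
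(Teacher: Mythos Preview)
Your outline matches the paper's structure: build rank $m+n$ TGWD data on the tensor of the base rings (this is Lemma~\ref{lem.twtensor}), verify regularity and consistency, then identify the resulting TGWA with $A \tensor_\tau B$. Two corrections are needed.

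Minor: by \eqref{eq.TGWCtau}, $\tau$ restricts to the \emph{trivial} flip $s \tensor r \mapsto r \tensor s$ on $S \tensor R$, so your ``$R_A \tensor_\tau R_B$'' is just the ordinary tensor product $R \tensor S$. Hypothesis~\ref{hyp.tgwa} is not about how $\tau$ behaves on the base rings; it is about extending the $\sigma_j$ and $\rho_i$ to commuting automorphisms of $R \tensor S$.

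More substantive: you have the difficulty of the identification step inverted. Checking that the explicit TGWC relations hold in $A \tensor_\tau B$ gives a surjection $\psi$ from the \emph{construction} $\widehat{C}=\cC_\eta(T,\pi,w)$, and since $\psi$ is graded and the identity on $T$ one gets $\ker\psi \subset \widehat{\cJ}$ for free. But the TGWA is $\widehat{C}/\widehat{\cJ}$, where $\widehat{\cJ}$ is defined abstractly as the sum of all graded ideals with trivial degree-zero part, not by explicit relations. Your claim that ``the proposed TGWA relations hold in $A \tensor_\tau B$, giving a surjection'' therefore already presupposes $\widehat{\cJ} \subset \ker\psi$, and this is the genuinely nontrivial inclusion. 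The paper proves it by taking homogeneous $c \in \widehat{\cJ}$, writing $\psi(c)=\sum a_i \tensor b_i$ with the $a_i$ (and $b_i$) linearly independent, left-multiplying by a monomial $X_\bu \tensor Y_\bv$ of complementary degree to land in $T$, and then invoking \cite[Theorem~4.3(iv)]{HO}---that left multiplication by monomials is injective in a \emph{regular} TGWA---to conclude $\psi(c)=0$. Your appeal to ``injectivity of the underlying twisted multiplication on $A \tensor B$'' is not a substitute for this step; regularity of $A$ and $B$ enters precisely here, through the cited Hartwig--\"Oinert result, and the mixed-index consistency check you flag as the principal difficulty is in fact the routine part (Lemma~\ref{lem.twtensor}).
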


A partial converse to Theorem \ref{thm.twtensor} is given in Theorem \ref{thm.converse}. 
In \cite{GRW2}, the authors along with Robert Won consider twisted tensor products of BR algebras \cite{BR,GRW1}. These results have mild overlap in the case of TGWAs of Cartan type $(A_1)^n$.

We also consider twists of TGWAs by graded twisting systems. 
A definition is given in Definition \ref{def.gr-twist}, along with that of the graded twist $\ltA$ of a graded algebra $A$. We show that the class of TGWAs is invariant under graded twisting.

\begin{theorem*}[Theorem \ref{thm.TGWAtwist}]
Let $A=A_\mu(R,\sigma,t)$ be a TGWA 
and $\gt$ a twisting system for $A$ that respects the $\ZZ^n$-grading on $A$. 
Then $\ltA$ is again a TGWA. 
\end{theorem*}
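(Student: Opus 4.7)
My plan is to exhibit $\ltA$ as a TGWA with an explicit new dataset, and then verify the defining relations directly. Write $\cdot_\chi$ for the twisted multiplication. Because $\chi$ respects the $\ZZ^n$-grading, each $\chi_g$ restricts to a $\kk$-linear automorphism of every graded component $A_h$; in particular it preserves $R = A_{\mathbf 0}$, and since $A_{e_i} = RX_i$ and $A_{-e_i} = RY_i$, I may write $\chi_g(X_i) = a_{g,i} X_i$ and $\chi_g(Y_i) = b_{g,i} Y_i$ for units $a_{g,i}, b_{g,i} \in R$. After a standard normalization, replacing $\chi$ by a cohomologous twisting system with $\chi_{\mathbf 0} = \id$, the base ring $R$ sits inside $\ltA$ with its original multiplication; I work with this normalized $\chi$.

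I would then extract the candidate TGWA data $(R, \sigma', t', \mu')$ for $\ltA$ by computing the relations under $\cdot_\chi$. The new automorphisms $\sigma_i' = \sigma_i \circ (\chi_{e_i}\restrict{R})$ are read off from $X_i \cdot_\chi r = \sigma_i'(r) \cdot_\chi X_i$; the new defining elements are $t_i' := Y_i \cdot_\chi X_i \in R$; and the new commutation scalars $\mu_{ij}'$ come from the identity $X_i \cdot_\chi Y_j = \mu_{ij}' \, Y_j \cdot_\chi X_i$ for $i \neq j$. Verifying the TGWA axioms for $(R, \sigma', t', \mu')$ --- commutativity of the $\sigma_i'$, the compatibility $\sigma_i'(t_j') \in t_j' R^\times$, and the symmetry and cyclic-product relations on $\mu'$ --- reduces in each case to the analogous axiom for $(R, \sigma, t, \mu)$ combined with the twisting-system identity $\chi_g(a\, \chi_h(b)) = \chi_g(a)\, \chi_{g+h}(b)$ applied in the appropriate graded degrees.

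To finish, the generators of $\ltA$ satisfy the defining relations of the TGWA associated to $(R, \sigma', t', \mu')$, yielding a graded surjection from that TGWA onto $\ltA$. Because a graded twist preserves all graded dimensions (it does not alter the underlying graded vector space of $A$), this surjection is a $\ZZ^n$-graded isomorphism, and the consistency and regularity hypotheses descend from $(R, \sigma, t, \mu)$ to the new data via the same identification. The main obstacle is this last verification: ensuring that the cyclic-product relations among the $\mu_{ij}'$ follow from those for $\mu$ after accounting for all the unit factors $a_{g,i}$ and $b_{g,i}$, together with tracking the $\chi$-cocycle identity across products of length three or more. This is precisely where the hypothesis that $\chi$ respects the $\ZZ^n$-grading is essential, since it forces $\chi_g$ to act by multiplication by units on each $A_{\pm e_i}$ and thereby makes the bookkeeping tractable.
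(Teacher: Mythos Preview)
Your outline captures the right shape of the argument, and the new data you extract --- automorphisms $\gt_{\be_i}^{-1}\sigma_i$ (note your formula $\sigma_i'=\sigma_i\circ\gt_{\be_i}$ has a sign error: from $X_i\cdot_\gt r=\gt_\bzero(X_i)r=\sigma_i(r)X_i$ and $\sigma_i'(r)\cdot_\gt X_i=\gt_{\be_i}(\sigma_i'(r))X_i$ one gets $\sigma_i'=\gt_{\be_i}^{-1}\sigma_i$), unchanged $t_i$, and rescaled $\mu$ --- agree with the paper's. But there is a genuine gap at the end.

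A TGWA is not presented by a finite list of relations: by definition $A_\nu(R,\sigma',t)=C_\nu(R,\sigma',t)/\cJ'$ where $\cJ'$ is the sum of all graded ideals with trivial degree-zero part. When you ``check that the generators of $\ltA$ satisfy the defining relations,'' you are only checking the TGWC relations, so what you actually obtain is a surjection $C'\to\ltA$, not $A'\to\ltA$. Since $\ker(C'\to\ltA)$ is graded with zero degree-zero part, it lies in $\cJ'$, and the surjection you get runs the \emph{wrong way}: $\ltA\twoheadrightarrow A'$. Your dimension argument does not repair this, because you know the graded pieces of $\ltA$ coincide with those of $A$, but you have no independent control over the graded pieces of $A'$; asserting that regularity and consistency ``descend via the same identification'' is circular, since that identification is exactly what you are trying to prove.

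The paper closes this gap by working one level up: it first shows that $\ltC$ is itself a TGWC (namely $C_\nu(R,\gt^{-1}\sigma,t)$), so $\ltC$ and $C'$ are literally the same algebra on the same vector space. It then proves directly that $\cJ=\lt\cJ$ as subsets of $C$, using the regular-element description of $\cJ$: if $rc=0$ with $r\in\cnt(R)\cap R_{\reg}$ and $c$ homogeneous of degree $\alpha$, then $\gt_\alpha^{-1}(r)\star c=rc=0$ in $\ltC$, so $c\in\lt\cJ$, and conversely. Hence $\ltA=\ltC/\lt\cJ=C'/\cJ'=A'$ on the nose. This is the step your outline is missing; without it you cannot conclude that the canonical ideal for the new TGWD matches the kernel you produced. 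Note also that the paper's actual hypothesis (Hypothesis~\ref{hyp.twist}) is stronger than ``$\gt$ respects the grading'': the $\gt_\alpha$ are assumed to be \emph{algebra} automorphisms forming a group homomorphism $\ZZ^m\to\Aut_\kk(C)$ and to scale each $X_j^\pm$ by a scalar in $\kk^\times$, which is what makes the commutation $\gt_\alpha\sigma_i=\sigma_i\gt_\alpha$ and the identification $\cJ=\lt\cJ$ go through cleanly.
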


When considering a graded twist of the tensor product of two TGWAs, the two notions coincide under certain conditions. See Proposition \ref{prop.twtw}.

A GWA over a noetherian base ring is again noetherian. However, whether this holds for TGWAs is an open question. In \cite{GR}, the authors proved this for TGWAs of Cartan type $(A_1)^n$ as well as certain TGWAs of Cartan type $A_2$. Here we obtain additional rank 2 examples by graded twisting (Example \ref{ex.A2}).



\section{Background}

Throughout $\kk$ is a field and we denote by $\kk^\times$ the group of units of $\kk$. All tensor products should be assumed to be over $\kk$. For each $n\in \NN$, we let $[n]=\{1,2,\ldots,n\}$.

All algebras will be considered as unital $\kk$-algebras. 
For an algebra $A$, $\cnt(A)$ denotes the center of $A$ and $A_{\reg}$ the set of regular elements of $A$.
The algebra $A$ is \emph{graded} by a monoid $\Gamma$ if there is a vector space decomposition $A = \bigoplus_{\gamma \in \Gamma} A_\gamma$ satisfying $A_\alpha \cdot A_\beta \subset A_{\alpha\beta}$ for all $\alpha,\beta \in \Gamma$. A map $\psi:A \to B$ of $\Gamma$-graded algebras is \emph{graded} if $\psi(A_\gamma) \subset B_\gamma$ for all $\gamma \in \Gamma$.

We begin by discussing our main objects of interest (TGWAs). Subsequently we will define twisted tensor products, graded twists, and cocycle twists.

\subsection{TGWAs}

The definition of a TGWA is due to Mazorchuk and Turowska \cite{MT}. This construction, in particular, realizes all generalized Weyl algebras.


\begin{definition}\label{defn.tgwa}
Let $n$ be a positive integer. A \emph{twisted generalized Weyl datum (TGWD) of rank $n$} is the triple $(R,\sigma,t)$ where $R$ is an algebra, $\sigma=(\sigma_1,\hdots,\sigma_n)$ is an $n$-tuple of commuting automorphisms of $R$, and $t=(t_1,\hdots,t_n) \in \cnt(R)^n$.
Given such a TGWD and $\mu=(\mu_{ij}) \in M_n(\kk^\times)$, the associated \emph{twisted generalized Weyl construction (TGWC)}, $\cC_\mu(R,\sigma,t)$, is the algebra generated over $R$ by the $2n$ indeterminates $X_1^{\pm},\hdots,X_n^{\pm}$ subject to the relations
\[ 
X_i^{\pm} r - \sigma_i^{\pm 1}(r) X_i^{\pm}, \quad
X_i^-X_i^+ - t_i, \quad X_i^+X_i^- - \sigma_i(t_i), \quad
X_i^+ X_j^- - \mu_{ij} X_j^- X_i^+.
\]
for all $r \in R$ and $i,j \in [n]$, $i \neq j$.
There is a $\ZZ^n$-grading on $\cC_\mu(R,\sigma,t)$ obtained by setting $\deg(r)=\bzero$ for all $r \in R$ and $\deg(X_i^{\pm})=\pm \be_i$ for all $i \in [n]$. The associated \emph{twisted generalized Weyl algebra (TGWA)}
is $\cA_\mu(R,\sigma,t)=\cC_\mu(R,\sigma,t)/\cJ$ 
where $\cJ$ is the sum of all graded ideals $J = \bigoplus_{g \in \ZZ^n} J_{g}$ such that $J_\bzero = \{0\}$.
\end{definition}



\begin{definition}
The TGWD $(R,\sigma,t)$ is \emph{regular} if $t_i \in R_{\reg}$ for all $i \in [n]$. For a parameter matrix $\mu$, $(R,\sigma,t)$ is \emph{$\mu$-consistent} if the canonical map $R \to A_\mu(R,\sigma,t)$ is injective. The TGWA $A_\mu(R,\sigma,t)$ is regular (resp. $\mu$-consistent) if the underlying TGWD is regular (resp. $\mu$-consistent).
\end{definition}

Suppose $(R,\sigma,t)$ is a regular TGWD. 
By \cite{hart4}, the ideal $\cJ$ in Definition \ref{defn.tgwa} can also be defined as follows:
\begin{align}\label{eq.ideal}
\cJ = \{ c \in C : rc=0 \text{ for some } r \in R_{\reg} \cap \cnt(R) \}.
\end{align}
By \cite[Theorem 6.2]{FH2}, $(R,\sigma,t)$ is $\mu$-consistent if and only if the following hold:
\begin{align}
\label{eq.cons1} &\sigma_i\sigma_j(t_it_j)=\mu_{ij}\mu_{ji}\sigma_i(t_i)\sigma_j(t_j) \text{ for all distinct $i,j \in [n]$,} \\
\label{eq.cons2} &t_j\sigma_i\sigma_k(t_j) = \sigma_i(t_j)\sigma_k(t_j) \text{ for all pairwise distinct $i,j,k \in [n]$.}
\end{align}

Let $A=\cA_\mu(R,\sigma,t)$ be a regular, $\mu$-consistent TGWA. For $i,j \in [n]$, define
\begin{align}\label{eq.vij}
    V_{ij} = \Span_\kk\{\sigma_i^k(t_j) ~|~ k \in \ZZ\}.
\end{align}
Then $A$ is \emph{$\kk$-finitistic} if $\dim_\kk V_{ij} < \infty$ for all $i,j$. 
In this case,
let $p_{ij} \in \kk[x]$ denote the minimal polynomial for $\sigma_i$ acting on $V_{ij}$, and let $C_A = (a_{ij})$ where $a_{ii}=2$ and $a_{ij}=1-\deg p_{ij}$ for $i \neq j$.
We say $A$ is of type $T$ when $C_A$ is a (generalized) Cartan matrix of type $T$.




\subsection{Twisted tensor products}
\label{sec.twistedtensor}

We recall the definition of a twisted tensor product, due to \u{C}ap, Schichl, and Van\u{z}ura \cite{cap}.

\begin{definition}\label{defn.twtensor}
A \emph{twisted tensor product} of algebras $A$ and $B$ is a triple $(C,\iota_A,\iota_B)$ where $C$ is an algebra, and $\iota_A:A \to C$ and $\iota_B:B \to C$ are injective (algebra) homomorphisms such that
the map $a \tensor b \mapsto \iota_A(a) \cdot \iota_B(b)$
is a linear isomorphism.
\end{definition}

A \emph{twisting map} for $(A,B)$ is a $\kk$-linear map $\tau:B \tensor A \to A \tensor B$ such that $\tau(b \tensor 1)=1 \tensor b$ and $\tau(1 \tensor a) = a \tensor 1$. Let $\mu_A$ and $\mu_B$ be the multiplication maps on $A$ and $B$, respectively. 
Define $C$ to be $A \tensor B$ (as a vector space) with multiplication 
\[ \mu_\tau : = (\mu_A \tensor \mu_B) \circ (\id_A \tensor \tau \tensor \id_B).\] 
Define $\iota_A:A \to A \tensor B$ by $\iota_A(a) = a \tensor 1$ and $\iota_B$ is defined similarly. Then $(C,\iota_A,\iota_B)$ is a twisted tensor product of $A$ and $B$ if and only if $\mu_\tau$ is associative. By \cite[Proposition 2.3]{cap}, this holds if and only if
the following diagram commutes
\[ 
\adjustbox{scale=.95,center}{%
\begin{tikzcd}
& B \tensor A \tensor B \tensor A \ar[r, "\tau \tensor \tau"] 
    & A \tensor B \tensor A \tensor B \ar[dr,"\id_A \tensor \tau \tensor \id_B"] \\
B \tensor B \tensor A \tensor A \arrow[ur, "\id_B \tensor \tau \tensor \id_A"]
    \arrow[r, swap, "\mu_B \tensor \mu_A"]
& B \tensor A \arrow[r,swap,"\tau"]
& A \tensor B & A \tensor A \tensor B \tensor B
    \arrow[l, "\mu_A \tensor \mu_B"]
\end{tikzcd}}
\]
When this holds, we simplify the notation $(C,\iota_A,\iota_B)$ to $A \tensor_\tau B$.
By \cite[Proposition 2.7]{cap}, all twisted tensor products can be realized, up to isomorphism, through a twisting map.

\begin{example}\label{ex.ttps}
(1) Given any two algebras $A$ and $B$, one can take the trivial twisting map defined by $\tau(b \tensor a)=a \tensor b$ for all $b \tensor a \in B \tensor A$. The corresponding twisted tensor product $A \tensor_\tau B$ is then just the usual tensor product $A \tensor B$.

(2) Let $A=\kk[x]$, $B=\kk[y]$, and $q\in \kk^\times$. Let $\tau:B \tensor A \to A \tensor B$ be the linear map defined by $\tau(y^n \tensor x^m) = q^{mn}(x^m \tensor y^n)$ for $m,n \in \NN$. Then $\tau$ is a twisting map and 
$A \tensor_\tau B \iso \kk_q[x,y] = \kk\langle x,y \mid yx-qxy\rangle$.

(3) \cite[Section 1.5]{AJ} Let $\Lambda=(\lambda_{ij}) \in M_n(\kk^\times)$ satisfying $\lambda_{ii}=1$ for all $i$ and $\lambda_{ij}=\lambda_{ji}\inv$ for $i \neq j$.
Let $q=(q_i) \in (\kk^\times)^n$. The \emph{alternative quantized Weyl algebra} is the algebra generated by $x_1,y_1,\hdots,x_n,y_n$ with relations
\[
    x_ix_j - \lambda_{ij} x_jx_i, ~~ 
    x_iy_j - \lambda_{ji} y_jx_i, ~~
    y_jy_i - \lambda_{ji}y_iy_j, ~~
    x_jy_i - \lambda_{ij} y_ix_j, ~~
    x_jy_j - q_j y_jx_j - 1,
\]
for all $i,j \in [n]$.
We can realize this algebra as an iterated twisted tensor product over copies of the first quantum Weyl algebra. See Example \ref{ex.mpwa}.
\end{example}

Suppose that $A$ is $\ZZ^m$-graded and that $B$ is $\ZZ^n$-graded. We say that a twisting map $\tau:B \tensor A \to A \tensor B$ is \emph{graded} if $\tau(B_\beta \tensor A_\alpha) \subset A_\alpha \tensor B_\beta$ for all $\alpha \in \ZZ^m$ and $\beta \in \ZZ^n$.
That is, $\tau$ respects the gradings on $A$ and $B$. Given a graded twisting map $\tau$, the twisted tensor product $A \tensor_\tau B$ is then (naturally) $\ZZ^{m+n}$-graded.

\subsection{Graded twists}

The notion of a graded twist is integral to the study of noncommutative projective algebraic geometry. Our treatment follows Zhang \cite{ztwist}.


\begin{definition}\label{def.gr-twist}
Let $\Gamma$ be a monoid and let $A$ be a $\Gamma$-graded algebra. A set of graded $\kk$-linear bijections of $A$, $\gt=\{ \gt_\alpha \mid \alpha \in \Gamma\}$
is a \emph{left twisting system} if
\begin{equation}\label{eq:left-twist}\gt_\alpha(\gt_\beta(c)b)=\gt_{\beta\alpha}(c)\gt_\alpha(b)\text{ for all }\alpha,\beta,\gamma \in \Gamma,~b \in A_\beta,~c\in A_\gamma.\end{equation} 
Given a left twisting system $\gt$ of $A$, the \emph{left graded twist of $A$ by $\gt$} is the algebra $\ltA$ that is $A$ as a $\kk$-vector space but with multiplication $\star$ given by $a \star b = \gt_\beta(a)b$ for $a \in A_\alpha$ and $b \in A_\beta$.
\end{definition}

Condition \eqref{eq:left-twist} guarantees that the product in $\ltA$ is associative. (By \cite[Prop 2.5]{ztwist}, graded twisting gives an equivalence relation on the set of $\Gamma$-graded algebras.)
Similarly, the condition $\gt_\alpha(b \gt_ \beta(c)) = \gt_\alpha(b) \gt_{\alpha\beta}(c)$ defines a \emph{right twisting system} and there is an associative multiplication $a \star b =a\gt_\alpha(b)$ for the \emph{right graded twist of $A$ by $\gt$}, denoted by $A^\gt$.

\begin{remark}\label{rem:twist-system}
Condition \eqref{eq:left-twist} is satisfied if all the maps $\gt_\alpha$ are algebra automorphisms of $A$ and the map $\Gamma^{\operatorname{op}}\to \Aut_\kk(A)$, given by $\alpha\to \gt_\alpha$, is a homomorphism. We focus on this setting and we take $\Gamma=\ZZ^n$. In this case we have $\gt_0=\id_A$ and hence $1_A=1_{\ltA}$ (see \cite[\S 2]{ztwist}).
\end{remark}

\subsection{Cocycle twists}
\label{sec.cocycle}

Let $\Gamma$ be a group. A \emph{$\kk^\times$-valued 2-cocycle on $\Gamma$} is a map $\ct: \Gamma \times \Gamma \to \kk^\times$ such that
\begin{align}\label{eq:cocycle} 
\ct(\alpha,\beta\gamma)\ct(\beta,\gamma) = \ct(\alpha\beta,\gamma)\ct(\alpha,\beta) \quad\text{for all $\alpha,\beta,\gamma \in \Gamma$.}
\end{align}
The set of all $\kk^\times$-valued 2-cocycles on $\Gamma$ is denoted $Z^2(\Gamma,\kk^\times)$. Cocycle twists provide an alternative way to construct twists of graded algebras and are common in the quantum groups literature (see, e.g., \cite{ASTgln}).
The cocycle condition guarantees that the twisted algebra $A^\ct$, defined below, is associative.

\begin{definition}
Let $A$ be a $\Gamma$-graded algebra and $\ct \in Z^2(\Gamma,\kk^\times)$. The \emph{cocycle twist of $A$ by $\ct$} is the algebra $A^\ct$ which is $A$ as a $\kk$-vector space but with multiplication $*_{\ct}$ given by $a*_{\ct} b = \ct(\alpha,\beta)ab$ for all $a \in A_\alpha$, $b \in A_\beta$, $\alpha,\beta \in \Gamma$.
\end{definition}

If $\Gamma$ is a group, $\ct\in Z^2(\Gamma,\kk^\times)$, and $A$ is a $\Gamma$-graded algebra, then for each $\alpha\in\Gamma$ we can define a graded $\kk$-linear map by $\gt_\alpha(x)=\ct(\beta,\alpha)x$, where $x\in A_\beta$. It is immediate to check that the cocyle condition \eqref{eq:cocycle} implies the twisting system condition \eqref{eq:left-twist} and that $\ltA\simeq A^\ct$ as algebras, with the isomorphism being the identity map on $A$ (see, e.g., \cite[Thm 2.2.19]{davies}). Similarly, we could define a right twisting system by $\gt'_\alpha(x)=\ct(\alpha, \beta)x$, with $x\in A_\beta$, and have $A^{\gt'}\simeq A^{\ct}$. Hence, cocycle twists can be considered as special cases of graded twists.

\section{Twisted tensor products of TGWAs}\label{sec.twist-tensor}

In this section, we show that the twisted tensor product of two TGWAs is again a TGWA under mild hypotheses. Our proof, including the following prepartory lemma, closely follows 
\cite[Theorem 2.16]{GR}.

\begin{lemma}\label{lem.reg}
Let $R$ and $S$ be algebras. Suppose that $t \in R$ and $h \in S$ are regular. Then $t \tensor 1$ and $1 \tensor h$ are regular in $R \tensor S$.
\end{lemma}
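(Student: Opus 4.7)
The plan is to use the standard fact that over a field $\kk$, every element of $R \tensor S$ admits a normal form with linearly independent second tensorands (or first tensorands, depending on the side), and that such a tensor is zero only when each coefficient vanishes.

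First I would check left regularity of $t \tensor 1$. Take an arbitrary $x \in R \tensor S$ with $(t \tensor 1) \cdot x = 0$, and write $x = \sum_{i=1}^{m} r_i \tensor s_i$ where $s_1,\ldots,s_m \in S$ are $\kk$-linearly independent (this is always possible by collecting terms along a basis of the $\kk$-span of the $S$-components). Then
\[
0 = (t \tensor 1)\sum_{i=1}^m r_i \tensor s_i = \sum_{i=1}^m (t r_i) \tensor s_i,
\]
and linear independence of the $s_i$ forces $t r_i = 0$ for each $i$. Regularity of $t$ in $R$ then gives $r_i = 0$ for all $i$, so $x = 0$. Right regularity of $t \tensor 1$ is entirely symmetric: one writes $x$ in normal form and uses that $t$ is also a right non-zero-divisor.

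For $1 \tensor h$, I would run the mirror argument, now writing $x = \sum_j r_j \tensor s_j$ with the $r_j$ linearly independent over $\kk$, and concluding $h s_j = 0$ (or $s_j h = 0$) for all $j$ from regularity of $h$ in $S$.

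The only real point to be careful about is the existence of the normal form and the implication that $\sum_i (tr_i) \tensor s_i = 0$ with $\{s_i\}$ linearly independent forces each $tr_i$ to vanish; this is the standard freeness of $R \tensor S$ as a left $R$-module on any $\kk$-basis of $S$ (and symmetrically), so there is no serious obstacle.
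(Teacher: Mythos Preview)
Your argument is correct and is precisely the standard one. The paper itself does not supply a proof of this lemma (it is stated without proof, as an elementary preliminary), so there is nothing to compare beyond noting that your normal-form argument is exactly the justification one would expect.
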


We assume the following hypothesis throughout this section.

\begin{hypothesis}\label{hyp.tgwa}
Assume that $R$ and $S$ are algebras. Let $C=\cC_\mu(R,\sigma,t)$ be a regular, $\mu$-consistent TGWC of rank $m$ with canonical generators $\{X_j^{\pm}\}$, $\cJ$ the canonical ideal associated to $C$, and $A=C/\cJ$ the resulting TGWA.
Let $D=\cC_\nu(S,\rho,h)$ be a regular, $\nu$-consistent TGWC of rank $n$ with canonical generators $\{Y_i^{\pm}\}$, $\cI$ the canonical ideal associated to $D$, and $B=D/\cI$ the resulting TGWA. 
We extend the $\sigma_j$ and the $\rho_i$ to commuting automorphisms of $R \tensor S$ such that $\rho_i(R \tensor 1) \subset R \tensor 1$ for each $i$ and $\sigma_j(1 \tensor S) \subset 1 \tensor S$ for each $j$.
\end{hypothesis}

Hypothesis \ref{hyp.tgwa} implies that each $\rho_i$ (resp. $\sigma_j$) may be viewed as an automorphism of $R$ (resp. $S$), which, by an abuse of notation, we also call $\rho_i$ (resp. $\sigma_j$). One can always extend the automorphisms in a trivial way (e.g., $\sigma_j\restrict{S} = \id_S$.)


To simplify notation, for $w \in \ZZ\backslash\{0\}$, let $\fs$ be the sign function where $\fs(w)=+$ if $w>0$ and $\fs(w)=-$ if $w<0$. Given $u \in \pm[m]$, set $X_{u}=X_{|u|}^{\fs(u)}$. If $\bu=(u_1,\ldots,u_k) \in (\pm [m])^k$, we define $X_\bu = X_{u_1} \cdots X_{u_k}$ and $d(\bu)=\deg X_{\bu}\in \ZZ^m$. For $\alpha=(\alpha_1,\ldots,\alpha_m)\in \ZZ^m$ we define an automorphism $\sigma^\alpha=\sigma_1^{\alpha_1}\cdots \sigma_m^{\alpha_m}$. Then $X_\bu r= \sigma^{d(\bu)}(r)X_\bu$ for all $r \in R$.
For $\bv\in(\pm[n])^{\ell}$, we define similarly $Y_\bv$ and $d(\bv)\in \ZZ^n$. 
The $X_\bu$ (resp. $Y_\bv$) generate the TGWC $C$ (resp. $D$) over $R$ (resp. $S$). 

In order to define a twisting map, we will first need to define certain scalars. For $1\leq j\leq m, 1\leq i\leq n$, let $q_{ij}^{(\pm,\pm)} \in \kk^\times$. For $v \in \pm [n]$ and $u \in \pm [m]$, set $q_{v,u} = q_{|v|,|u|}^{(\fs(v),\fs(u))}$. Now let $\bu \in (\pm [m])^k$ and $\bv \in (\pm [n])^\ell$. Then
\begin{align}\label{eq.qdef}
q_{(\bv,\bu)} = \prod_{v_i \in \bv,u_j \in \bu} q_{v_i,u_j}.
\end{align}

\begin{remark}\label{rem.qij}
It is not necessary that all of the $q_{ij}^{(\pm,\pm)}$ belong to the field $\kk$. It is sufficient that the $q_{ij}^{(+,+)}$ and $q_{ij}^{(-,-)}$ are central units of $R \tensor S$ that are fixed by the extensions of the $\rho_i$ and $\sigma_j$. See Theorem \ref{thm.converse}.
\end{remark}

Let $\bu,\bu'$ and $\bv,\bv'$ be sequences. We denote by $\bu\bu'$ the concatenation of $\bu$ and $\bu'$ (and similarly for $\bv,\bv'$). Thus, $X_{\bu\bu'}=X_\bu X_{\bu'}$ and so $d(\bu\bu')=d(\bu)+d(\bu')$. 
It is then clear that
\begin{align}\label{eq.pcom}
q_{(\bv,\bu)} q_{(\bv,\bu')} = q_{(\bv,\bu\bu')} \quad\text{and}\quad
q_{(\bv,\bu)} q_{(\bv',\bu)} = q_{(\bv\bv',\bu)}.
\end{align}

In the setting of Hypothesis \ref{hyp.tgwa}, we define
a $\kk$-linear map $\tau:D\otimes C\to C\otimes D$ for all $r \in R$, $s \in S$, $\bu \in (\pm [m])^k$, and $\bv \in (\pm [n])^\ell$,  as
\begin{align}\label{eq.TGWCtau}
\tau(s Y_\bv \tensor r X_\bu)
	= q_{(\bv,\bu)} \rho^{d(\bv)}(r) X_\bu \tensor \sigma^{-d(\bu)}(s) Y_\bv.
\end{align}
Equation \eqref{eq.TGWCtau} implies that $\tau(s \tensor r) = r \tensor s$ for all $s \in S$ and $r \in R$. Moreover, the map respects the $\ZZ^{m+n}$-grading on $C \tensor D$ (and $D \tensor C$).

\begin{lemma}\label{lem.tauprops}
The map in \eqref{eq.TGWCtau}
is well defined if and only if for all $i,j$ we have 
\begin{align}\label{eq.rhosigma-2}
\rho_i(t_j) = q_{ij}^{(+,-)}q_{ij}^{(+,+)} t_j \quad\text{and}\quad
\sigma_j(h_i) = q_{ij}^{(-,-)}q_{ij}^{(+,-)} h_i.
\end{align}
If that is the case, then
\begin{align}\label{eq.prod1-2}
q_{ij}^{(-,-)}q_{ij}^{(-,+)}q_{ij}^{(+,-)}q_{ij}^{(+,+)}=1.
\end{align}
\end{lemma}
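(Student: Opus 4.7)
The plan is to reduce the well-definedness of $\tau$ to a collection of scalar identities by running through the defining relations of $C$ and $D$. Since the formula \eqref{eq.TGWCtau} is specified on the spanning set $\{sY_\bv \otimes rX_\bu\}$ of $D \otimes C$, well-definedness amounts to asking that $\tau$ respect each defining relation of $C$ applied in the right tensor factor and each defining relation of $D$ applied in the left. A quick check will show that the commutations $X_j^\pm r = \sigma_j^{\pm 1}(r) X_j^\pm$, the cross-commutations $X_i^+ X_j^- = \mu_{ij} X_j^- X_i^+$ (for $i \neq j$), and the analogous $Y$-relations produce no constraint on the $q_{ij}^{(\pm,\pm)}$: they are preserved automatically using the multiplicativity \eqref{eq.pcom} and the hypothesis that the $\rho_i$ commute with the $\sigma_j$. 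All nontrivial constraints therefore come from the relations $X_j^- X_j^+ = t_j$, $X_j^+ X_j^- = \sigma_j(t_j)$, and their $Y$-analogues $Y_i^- Y_i^+ = h_i$, $Y_i^+ Y_i^- = \rho_i(h_i)$, which involve the central elements $t_j$ and $h_i$.

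For the forward direction, I would compute $\tau(sY_\bv \otimes rX_j^- X_j^+)$ in two ways: first as $\tau(sY_\bv \otimes rX_\bu)$ with $\bu = (-j,+j)$, and second after rewriting $rX_j^- X_j^+ = rt_j$ so that $\bu = \emptyset$. Equating the outputs and using that $t_j \in \cnt(R) \cap R_{\reg}$ to cancel, the identity $\rho^{d(\bv)}(t_j) = q_{(\bv,(-j,+j))} t_j$ emerges for every $\bv$. Specializing to $\bv = (+i)$ recovers the first equation of \eqref{eq.rhosigma-2}; specializing to $\bv = (-i)$ yields $\rho_i^{-1}(t_j) = q_{ij}^{(-,-)}q_{ij}^{(-,+)} t_j$, and applying $\rho_i$ to this together with the $\bv = (+i)$ identity (cancelling via regularity of $t_j$) forces \eqref{eq.prod1-2}. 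A symmetric argument applied to the $D$ relation $Y_i^- Y_i^+ = h_i$ for $\bu = (\pm j)$ yields the second equation of \eqref{eq.rhosigma-2} and again recovers \eqref{eq.prod1-2}.

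Conversely, given \eqref{eq.rhosigma-2} together with the derived \eqref{eq.prod1-2}, I would verify that $\tau$ respects every defining relation by extending the length-one identities to arbitrary $\bv, \bu$ inductively on length, using \eqref{eq.pcom} to split $q_{(\bv,\bu)}$ into length-one factors and using the commutation of the $\rho_i$ with the $\sigma_j$ to pass automorphisms across the scalar multiples of $t_j$ and $h_i$ at each step. The companion relations $X_j^+ X_j^- = \sigma_j(t_j)$ and $Y_i^+ Y_i^- = \rho_i(h_i)$ follow by applying $\sigma_j$ (respectively $\rho_i$) to the already-verified identities. The main obstacle is the bookkeeping of the four sign combinations $(\pm,\pm)$ in $q_{ij}$ and of the signs in the sequence indices $\bv, \bu$, and verifying that centrality of $t_j$ and $h_i$ permits the required scalar cancellations at each inductive step so that the length-one constraints really do propagate to all $\bv$ and $\bu$.
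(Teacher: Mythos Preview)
Your proposal is correct and follows essentially the same approach as the paper's proof. Both argue that the reorder/cross-commutation relations are preserved automatically via \eqref{eq.pcom}, that the genuine constraints arise from the relations $X_j^-X_j^+=t_j$ and $Y_i^-Y_i^+=h_i$ (and their $\sigma_j$-, $\rho_i$-shifted companions), and both extract \eqref{eq.rhosigma-2} and \eqref{eq.prod1-2} by computing $\tau(Y_i^{\pm}\otimes X_j^-X_j^+)$ two ways; the only cosmetic difference is that the paper handles the ``if'' direction for general $\bv$ in one shot rather than by an explicit induction on length.
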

\begin{proof}
Assume \eqref{eq.rhosigma-2}.
If $X_\bu=\mu X_{\bu'}$ with $\mu\in\kk^\times$ and $\bu'$ containing the same indices as $\bu$ but potentially in a different order, then $d(\bu)=d(\bu')$ and $q_{(\bv,\bu)}=q_{(\bv,\bu')}$, so 
\begin{align*}
\tau(s Y_\bv \tensor r\mu X_{\bu'}) 
    &= q_{(\bv,\bu')} \rho^{d(\bv)}(r\mu) X_{\bu'} \tensor \sigma^{-d(\bu')}(s) Y_\bv \\
    &= q_{(\bv,\bu)} \rho^{d(\bv)}(r)\mu X_{\bu'} \tensor \sigma^{-d(\bu)}(s) Y_\bv \\
    &=q_{(\bv,\bu)} \rho^{d(\bv)}(r) X_\bu \tensor \sigma^{-d(\bu)}(s) Y_\bv
    =\tau(s Y_\bv \tensor r X_\bu).
\end{align*}
If $X_\bu=\mu\sigma^\alpha(t_i) X_{\bu'}$ with $\mu\in\kk^\times$, $\alpha\in\ZZ^m$, and $\bu'$ containing the same indices as $\bu$ minus $\{\pm i\}$ (and potentially in a different order), then $d(\bu)=d(\bu')$ and $q_{(\bv,\bu)}=q_{(\bv,(-i,i))}q_{(\bv,\bu')}$, so
\begin{align*}
\tau(s Y_\bv \tensor r\mu\sigma^\alpha(t_i) X_{\bu'}) &= q_{(\bv,\bu')} \rho^{d(\bv)}(r\mu\sigma^\alpha(t_i)) X_{\bu'} \tensor \sigma^{-d(\bu')}(s) Y_\bv \\
    &= q_{(\bv,\bu)}q_{(\bv,(-i,i))}\inv \rho^{d(\bv)}(r)\sigma^\alpha(\rho^{d(\bv)}(t_i))\mu X_{\bu'} \tensor \sigma^{-d(\bu)}(s) Y_\bv \\
    &=q_{(\bv,\bu)}q_{(\bv,(-i,i))}\inv q_{(\bv,(-i,i))} \rho^{d(\bv)}(r)\mu\sigma^{\alpha}(t_i) X_{\bu'} \tensor \sigma^{-d(\bu)}(s) Y_\bv \\
    &=q_{(\bv,\bu)}\rho^{d(\bv)}(r)X_\bu \tensor \sigma^{-d(\bu)}(s) Y_\bv
    =\tau(s Y_\bv \tensor r X_\bu),
\end{align*}
where the third equality follows from \eqref{eq.rhosigma-2}. This shows that $\tau$ preserves the relations in $C$. An analogous argument using $Y_{\bv}$ shows that it also preserves the relations in $D$, so it is well defined.

Now suppose that $\tau$ is well defined on $D\otimes C$, then we have
\begin{align*}
\rho_i(t_j) \tensor Y_i^+
    &= \tau(Y_i^+ \tensor t_j)
    = \tau(Y_i^+ \tensor X_j^-X_j^+) \\
    &= q_{ij}^{(+,-)}q_{ij}^{(+,+)} X_j^-X_j^+ \tensor Y_i^+
    = q_{ij}^{(+,-)}q_{ij}^{(+,+)} t_j \tensor Y_i^+.
\end{align*}
Similarly, $\rho_i\inv(t_j) \tensor Y_i^-=q_{ij}^{(-,-)}q_{ij}^{(-,+)} t_j \tensor Y_i^-$. This implies \eqref{eq.rhosigma-2} for the $\rho_i$ and further, $t_j=\rho_i\inv(\rho_i(t_j))$ implies \eqref{eq.prod1-2}. One obtains \eqref{eq.rhosigma-2} for the $\sigma_j$ similarly.
\end{proof}

We are now ready to show that $\tau$ defines a twisting map.

\begin{lemma}\label{lem.assoc}
Assume Hypothesis \ref{hyp.tgwa} and \eqref{eq.rhosigma-2}. The map $\tau$ defined in \eqref{eq.TGWCtau} satisfies the associativity conditions of Section \ref{sec.twistedtensor}.
\end{lemma}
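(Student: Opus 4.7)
The plan is to invoke the two equivalent multiplicativity conditions for a twisting map given in \cite[Proposition 2.3]{cap}: it suffices to show that for all $a,a' \in C$ and $b,b' \in D$,
\begin{align*}
\tau(b \tensor aa') &= (\mu_C \tensor \id_D) \circ (\id_C \tensor \tau) \circ (\tau \tensor \id_C)(b \tensor a \tensor a'), \\
\tau(bb' \tensor a) &= (\id_C \tensor \mu_D) \circ (\tau \tensor \id_D) \circ (\id_D \tensor \tau)(b \tensor b' \tensor a).
\end{align*}
By $\kk$-linearity I would reduce to checking these identities on pure tensors of the form $sY_\bv \tensor rX_\bu$, with $r'X_{\bu'}\in C$ and $s'Y_{\bv'}\in D$ for the extra slot, via the defining formula \eqref{eq.TGWCtau}.

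For the first identity, I would expand $rX_\bu \cdot r'X_{\bu'} = r\sigma^{d(\bu)}(r')X_{\bu\bu'}$ using $X_\bu r' = \sigma^{d(\bu)}(r')X_\bu$ and apply \eqref{eq.TGWCtau} to the left-hand side. On the right-hand side I would apply $\tau$ twice in succession and then multiply in $C$, introducing one more factor of $\sigma^{d(\bu)}$ from the same commutation rule. Matching the two sides then comes down to three ingredients: (i) the factorization $q_{(\bv,\bu\bu')} = q_{(\bv,\bu)}\,q_{(\bv,\bu')}$ from \eqref{eq.pcom}; (ii) that $\rho^{d(\bv)}$ is an algebra automorphism of $R$, so it distributes across the internal product $r\cdot \sigma^{d(\bu)}(r')$; and (iii) the commutativity $\sigma^{d(\bu)}\rho^{d(\bv)} = \rho^{d(\bv)}\sigma^{d(\bu)}$ supplied by Hypothesis \ref{hyp.tgwa}. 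The second identity I would handle by the symmetric argument, swapping the roles of $C$ and $D$ and invoking $Y_\bv s = \rho^{d(\bv)}(s)Y_\bv$, the other half of \eqref{eq.pcom}, and the fact that $\sigma^{-d(\bu)}$ is an algebra automorphism of $S$.

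I expect the main obstacle to be purely bookkeeping: tracking the composed automorphisms $\rho^{d(\bv)}$ and $\sigma^{\pm d(\bu)}$ as they migrate through the computation, and verifying that every $q$-factor lines up on both sides. It is worth highlighting that the well-definedness conditions \eqref{eq.rhosigma-2} play no role here, since they were used only to ensure that $\tau$ descends from the free construction to $D \tensor C$; associativity of the resulting product is a formal consequence of the defining formula \eqref{eq.TGWCtau} combined with (i)--(iii).
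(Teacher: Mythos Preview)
Your proposal is correct and follows essentially the same route as the paper: a direct computation using the explicit formula \eqref{eq.TGWCtau}, together with \eqref{eq.pcom} and the commutativity of the $\sigma_j$ with the $\rho_i$ from Hypothesis \ref{hyp.tgwa}. The only cosmetic difference is that the paper verifies the single hexagon identity $(\mu_C \tensor \mu_D)(\id_C \tensor \tau \tensor \id_D)(\tau \tensor \tau)(\id_D \tensor \tau \tensor \id_C) = \tau \circ (\mu_D \tensor \mu_C)$ in one long chain of equalities, whereas you split it into the two equivalent three-factor multiplicativity conditions from \cite[Proposition 2.3]{cap}; your observation that \eqref{eq.rhosigma-2} is used only for well-definedness of $\tau$ and not for the associativity computation itself is also accurate.
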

\begin{proof}
Let $\bu''=\bu\bu'$ and $\bv''=\bv\bv'$. Using \eqref{eq.pcom} repeatedly, we have,
\begin{align*}
(&\mu_C \tensor \mu_D)(\id_C \tensor \tau \tensor \id_D)(\tau \tensor \tau)(\id_D \tensor \tau \tensor \id_C)(s Y_{\bv} \tensor s' Y_{\bv'} \tensor r X_{\bu} \tensor r' X_{\bu'}) \\
	&= q_{(\bv',\bu)} (\mu_C \tensor \mu_D)(\id_C \tensor \tau \tensor \id_D)(\tau \tensor \tau)(s Y_{\bv} \tensor \rho^{d(\bv')}(r) X_{\bu} \\
        &\hspace{7.3cm} \tensor \sigma^{-d(\bu)}(s') Y_{\bv'}  \tensor r' X_{\bu'}) \\
	&= q_{(\bv'',\bu)} q_{(\bv',\bu')} (\mu_C \tensor \mu_D)(\id_C \tensor \tau \tensor \id_D)(\rho^{d(\bv'')}(r) X_{\bu} \tensor \sigma^{-d(\bu)}(s) Y_{\bv} \\
        &\hspace{7.3cm} \tensor  \rho^{d(\bv')}(r') X_{\bu'} \tensor \sigma^{-d(\bu'')}(s') Y_{\bv'}  )  \\
	&= q_{(\bv'',\bu'')}(\rho^{d(\bv'')}(r)\sigma^{d(\bu)}(\rho^{d(\bv'')}(r') ) X_{\bu''} \tensor \sigma^{-d(\bu'')}(s) \rho^{d(\bv)}(\sigma^{-d(\bu'')}(s')) Y_{\bv''}) \\
	&\stackrel{\star}{=} q_{(\bv'',\bu'')} (\rho^{d(\bv'')}(r\sigma^{d(\bu)}(r'))X_{\bu''} \tensor \sigma^{-d(\bu'')}(s\rho^{d(\bv)}(s')) Y_{\bv''} ) \\
	&= \tau( s\rho^{d(\bv)}(s') Y_{\bv''} \tensor r\sigma^{d(\bu)}(r') X_{\bu''}) \\
	&= \tau \circ (\mu_D \tensor \mu_C)(s Y_{\bv} \tensor s' Y_{\bv'} \tensor r X_{\bu} \tensor r' X_{\bu'}),
\end{align*}
where the step marked $\star$ follows because the $\sigma_i$ and $\rho_j$ are assumed to commute.
\end{proof}

\begin{lemma}\label{lem.aut_extend}
Assume Hypothesis \ref{hyp.tgwa} and \eqref{eq.rhosigma-2}. For each $i$, we extend the automorphism $\rho_i\in \Aut_\kk(R)$ to a multiplicative map $\rho_i:C\to C$ by defining $\rho_i(X_j^{\pm})=q_{ij}^{(+,\pm)} X_j^\pm$. Then $\rho_i\in \Aut_{\kk}(C)$ is a graded algebra automorphism. Similarly, for each $j$, $\sigma_j \in \Aut_\kk(S)$ extends to a graded automorphism in $\Aut_\kk(D)$.
\end{lemma}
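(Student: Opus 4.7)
The plan is to use the fact that $C$ is presented by generators and relations (Definition~\ref{defn.tgwa}): once I verify that the proposed assignment $r \mapsto \rho_i(r)$ for $r \in R$ and $X_j^\pm \mapsto q_{ij}^{(+,\pm)} X_j^\pm$ preserves each of the defining relations, it extends uniquely to a $\kk$-algebra endomorphism of $C$.

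I would check the four families of relations in turn. The cross-commutation $X_j^+ X_k^- - \mu_{jk} X_k^- X_j^+$ (for $j \neq k$) is preserved because both sides acquire the same scalar $q_{ij}^{(+,+)} q_{ik}^{(+,-)}$. The $R$-commutation $X_j^\pm r - \sigma_j^{\pm 1}(r) X_j^\pm$ is preserved because the scalar $q_{ij}^{(+,\pm)}$ passes through $r$ and, by Hypothesis~\ref{hyp.tgwa}, $\rho_i$ commutes with $\sigma_j^{\pm 1}$. The two remaining relations $X_j^- X_j^+ - t_j$ and $X_j^+ X_j^- - \sigma_j(t_j)$ are the substantive case: applying the candidate map to $X_j^- X_j^+$ yields $q_{ij}^{(+,-)} q_{ij}^{(+,+)} t_j$, which equals $\rho_i(t_j)$ by the first identity in \eqref{eq.rhosigma-2}. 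The second relation is handled analogously, using in addition that each $q_{ij}^{(\pm,\pm)} \in \kk$ is fixed by $\sigma_j$.

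To see that the resulting endomorphism is an automorphism, I would construct an explicit two-sided inverse by the analogous assignment $r \mapsto \rho_i^{-1}(r)$ on $R$ together with $X_j^\pm \mapsto (q_{ij}^{(+,\pm)})^{-1} X_j^\pm$. Applying $\rho_i^{-1}$ to \eqref{eq.rhosigma-2} gives $\rho_i^{-1}(t_j) = (q_{ij}^{(+,-)} q_{ij}^{(+,+)})^{-1} t_j$, which is the analog of \eqref{eq.rhosigma-2} needed to rerun the preceding argument for this inverted assignment. Since both composites fix every element of $R$ and each generator $X_j^\pm$, they must equal $\id_C$. The extension is graded because $R \subset C_\bzero$ is preserved and each $X_j^\pm \in C_{\pm \be_j}$ maps to a scalar multiple of itself. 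The corresponding claim for $\sigma_j \in \Aut_\kk(S)$ extending to $D$ follows by interchanging the roles of $(R,\sigma,X,C)$ and $(S,\rho,Y,D)$ and invoking the second identity in \eqref{eq.rhosigma-2}.

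There is no real obstacle beyond bookkeeping; the scalars $q_{ij}^{(+,\pm)}$ are tuned precisely to compensate for the defect by which $\rho_i$ fails to preserve $t_j$, as recorded in \eqref{eq.rhosigma-2}, and the content of the lemma is essentially that this compensation propagates consistently through all the relations of the TGWC. The only place where care is needed is ensuring that the two appearances of the scalar in $X_j^- X_j^+$ combine in the correct order to match $\rho_i(t_j)$, and that commutativity of $\sigma_j$ and $\rho_i$ (together with centrality of the $q_{ij}^{(\pm,\pm)}$) is invoked when verifying the $\sigma_j(t_j)$ relation.
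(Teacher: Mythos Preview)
Your proposal is correct and follows essentially the same approach as the paper: verify that the assignment preserves each of the four families of defining relations of $C$ (with the substantive case being $X_j^- X_j^+ - t_j$, handled via \eqref{eq.rhosigma-2}), and conclude that it extends to a graded algebra endomorphism. The paper's proof is terser and omits the explicit construction of the inverse and the gradedness remark, but your more detailed checks are all sound and amount to the same argument.
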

\begin{proof}
Let $j,k \in [m]$ and $r \in R$.
That $\rho_i$ preserves the relations $X_j^+X_k^- - \mu_{jk}X_k^-X_j^+$ is clear. Because $\rho_i$ and $\sigma_j$ commute, $\rho_i$ preserves $X_j^{\pm}r - \sigma_j^{\pm}(r)X_j^{\pm}$. 
By \eqref{eq.rhosigma-2}, 
\[ \rho_i(X_j^-)\rho_i(X_j^+) - \rho_i(t_j)
    = q_{ij}^{(+,-)}q_{ij}^{(+,+)} (X_j^-X_j^+ - t_j)
    = 0.
\]
That $\rho_i$ preserves $X_j^+X_j^--\sigma_j(t_j)$ is similar.
\end{proof}

By Lemma \ref{lem.aut_extend}, we may rewrite \eqref{eq.TGWCtau} more simply as
\begin{align}
\tau(sY_\bv \tensor rX_\bu) = \rho^{d(\bv)}(rX_\bu) \tensor \sigma^{-d(\bu)}(sY_\bv).
\end{align}

\begin{proposition}\label{prop.descend}
Assume Hypothesis \ref{hyp.tgwa} and \eqref{eq.rhosigma-2}. The map $\tau$ in \eqref{eq.TGWCtau} satisfies $\tau(D\tensor\cJ+ \cI\tensor C)\subset\cJ\otimes D + C\tensor\cI$. Hence $\tau:D\tensor C\to C\otimes D$ descends to the quotient to a twisting map $B\tensor A\to A\tensor B$.
\end{proposition}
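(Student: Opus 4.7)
The plan is to show $\tau(D\tensor\cJ+\cI\tensor C)\subset \cJ\tensor D+C\tensor\cI$; once this containment is in hand, the fact that $\tau$ descends to a twisting map $\bar\tau:B\tensor A\to A\tensor B$ is formal, since the unitality conditions $\bar\tau(\bar b\tensor 1)=1\tensor\bar b$ and $\bar\tau(1\tensor\bar a)=\bar a\tensor 1$ together with the associativity diagram of Section \ref{sec.twistedtensor} pass to the quotient from the corresponding properties of $\tau$ already verified in Lemmas \ref{lem.tauprops} and \ref{lem.assoc}.

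The main tool is the reformulation of \eqref{eq.TGWCtau} obtained from Lemma \ref{lem.aut_extend}, namely
$$\tau(sY_\bv\tensor rX_\bu)=\rho^{d(\bv)}(rX_\bu)\tensor \sigma^{-d(\bu)}(sY_\bv),$$
in which $\rho^{d(\bv)}\in\Aut_\kk(C)$ and $\sigma^{-d(\bu)}\in\Aut_\kk(D)$ are now graded $\kk$-algebra automorphisms extending the corresponding automorphisms of $R$ and $S$. Since $\cJ$ and $\cI$ are graded ideals, it suffices to work with homogeneous elements, and bilinearity of $\tau$ upgrades the above identity to
$$\tau(d\tensor c)=\rho^{\beta}(c)\tensor\sigma^{-\alpha}(d)\qquad\text{for all }c\in C_\alpha,\ d\in D_\beta.$$

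Given this formula, the inclusion reduces to showing that $\rho^\beta$ preserves $\cJ$ and that $\sigma^{-\alpha}$ preserves $\cI$. I would handle the $\cJ$ case as follows; the $\cI$ case is symmetric. Take a homogeneous $c\in\cJ$. By regularity of the datum, the characterization \eqref{eq.ideal} produces $r\in R_{\reg}\cap\cnt(R)$ with $rc=0$. Applying the algebra automorphism $\rho^\beta$ of $C$ yields $\rho^\beta(r)\rho^\beta(c)=0$, and since $\rho^\beta$ restricts to an automorphism of $R$ that preserves both the center and the regular elements, $\rho^\beta(r)\in R_{\reg}\cap\cnt(R)$; thus $\rho^\beta(c)\in\cJ$, again by \eqref{eq.ideal}. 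Combining the two halves gives $\tau(d\tensor c)\in\cJ\tensor D$ whenever the homogeneous element $c$ lies in $\cJ$, and $\tau(d\tensor c)\in C\tensor\cI$ whenever the homogeneous element $d$ lies in $\cI$; summing over graded components completes the proof. I do not anticipate any serious obstacle: the substantive work has already been carried out in Lemma \ref{lem.aut_extend} to extend the $\rho_i$ and $\sigma_j$ (and their inverses) to graded automorphisms of $C$ and $D$, and what remains is a direct consequence of the $R_{\reg}\cap\cnt(R)$-description of the canonical ideal.
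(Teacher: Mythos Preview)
Your proposal is correct and follows essentially the same approach as the paper: reduce to showing that the graded automorphisms $\rho^\beta$ of $C$ and $\sigma^{-\alpha}$ of $D$ from Lemma~\ref{lem.aut_extend} preserve $\cJ$ and $\cI$ respectively, then conclude via the reformulated $\tau$. The only cosmetic difference is that you verify $\rho^\beta(\cJ)\subset\cJ$ via the annihilator description \eqref{eq.ideal}, whereas the paper uses the original definition of $\cJ$ as the sum of all graded ideals with trivial intersection with $R$; both arguments are immediate.
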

\begin{proof}
By Lemma \ref{lem.aut_extend}, we may view each $\rho_i$ as a graded automorphism of $C$. Since $\cJ$ is the sum of all graded ideals having trivial intersection with $R$, then it follows that $\rho_i(\cJ) \subset \cJ$ for each $i \in [n]$. Similarly, $\sigma_j(\cI) \subset \cI$ for each $j \in [m]$. The result now follows from \eqref{eq.TGWCtau}.
\end{proof}

By an abuse of notation, the twisting map $B \tensor A \to A \tensor B$ implied by Proposition \ref{prop.descend} will also be denoted $\tau$.
It is then clear that the properties expressed in Lemma \ref{lem.tauprops} and Lemma \ref{lem.assoc} apply equally to this twisting map.




\begin{lemma}\label{lem.twtensor}
Assume Hypothesis \ref{hyp.tgwa} and \eqref{eq.rhosigma-2}. Set $T=R \tensor S$. Let $\pi=(\pi_1,\hdots,\pi_{m+n})$ and $w=(w_1,\hdots,w_{m+n})$ where
\begin{gather}
\label{eq.piw}
\pi_i = \begin{cases}
\sigma_i & \text{if } i\leq m \\
\rho_{i-m}& \text{if } i>m,
\end{cases}\qquad
w_i = \begin{cases}
t_i \tensor 1 & \text{if } i\leq m \\
1 \tensor h_{i-m}& \text{if } i>m,
\end{cases} \\
\label{eq.taudef}
\eta_{ij} = \begin{cases}
\mu_{ij} & \text{if $i,j \leq m$} \\
\left(q_{j-m,i}^{(-,+)}\right)\inv & \text{if $i \leq m$ and $j > m$} \\
q_{i-m,j}^{(+,-)} & \text{if $i > m$ and $j \leq m$} \\
\nu_{ij} & \text{if $i,j > m$.}
\end{cases}
\end{gather}
Then $(T,\pi,w)$ is a regular, $\eta$-consistent TGWD of rank $m+n$.
\end{lemma}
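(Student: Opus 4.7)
The plan is to verify the defining properties of a regular, $\eta$-consistent TGWD of rank $m+n$ in order: the TGWD structure of $(T,\pi,w)$, its regularity, and the consistency relations \eqref{eq.cons1}, \eqref{eq.cons2} for the parameter matrix $\eta$. The first two points are short. For the TGWD structure, I would observe that each $\pi_k$ is an automorphism of $T=R\otimes S$ preserving both $R\otimes 1$ and $1\otimes S$ (for $k\le m$ the former is automatic, since $\pi_k=\sigma_k$ extends the original automorphism of $R$, and the latter is part of Hypothesis \ref{hyp.tgwa}; for $k>m$ the situation is symmetric), so multiplicativity forces a factorization $\pi_k(r\otimes s)=\pi_k^R(r)\otimes\pi_k^S(s)$ with $\pi_k^R\in\Aut(R)$ and $\pi_k^S\in\Aut(S)$. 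Commutativity of the $\pi_k$ is part of Hypothesis \ref{hyp.tgwa}, and $w_k\in\cnt(T)$ follows from $t_i\in\cnt(R)$ and $h_j\in\cnt(S)$. Regularity of each $w_k$ is immediate from Lemma \ref{lem.reg}.

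The substance is in the consistency check. I would split into cases according to whether each relevant index lies in $[m]$ or in $\{m+1,\ldots,m+n\}$. When all indices lie in the same half, both relations project onto that factor and reduce to the consistency of $(R,\sigma,t)$ with $\mu$ or of $(S,\rho,h)$ with $\nu$, which hold by hypothesis. In the mixed case of \eqref{eq.cons1}, say $i\le m<j$ with $k=j-m$, I would expand both sides of $\pi_i\pi_j(w_iw_j)=\eta_{ij}\eta_{ji}\pi_i(w_i)\pi_j(w_j)$ using the factorization above together with the cross-action formulas $\rho_k^R(t_i)=q_{ki}^{(+,-)}q_{ki}^{(+,+)}t_i$ and $\sigma_i^S(h_k)=q_{ki}^{(-,-)}q_{ki}^{(+,-)}h_k$ from \eqref{eq.rhosigma-2}. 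The resulting product of four $q$-scalars on the left collapses via \eqref{eq.prod1-2} to $q_{ki}^{(+,-)}(q_{ki}^{(-,+)})^{-1}$, which is exactly $\eta_{ji}\eta_{ij}$; the opposite mixed case is analogous.

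For \eqref{eq.cons2} in any mixed configuration of $i,j,k$, the key observation is that whenever a $\sigma$-type automorphism acts on an $h$-factor (or vice versa), \eqref{eq.rhosigma-2} reduces that cross-action to multiplication by a scalar in $\kk^\times$. Those scalar prefactors appear identically on both sides of $w_j\pi_i\pi_k(w_j)=\pi_i(w_j)\pi_k(w_j)$ and cancel, leaving either a trivial tautology or \eqref{eq.cons2} for the factor TGWD containing $w_j$, which holds by assumption.

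\textbf{Main obstacle:} I expect the only genuine calculation to be the mixed case of \eqref{eq.cons1}. One must commute $\sigma_i^S$ past $\rho_k$ on $S$ (using that the extended automorphisms commute) and then invoke \eqref{eq.prod1-2} to rewrite the resulting product of four $q$-scalars into the asymmetric pair $\eta_{ij}\eta_{ji}$ prescribed by the definition of $\eta$ for mixed indices. The remaining checks are scalar cancellations together with the pure-half consistency already available.
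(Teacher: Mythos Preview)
Your proposal is correct and follows essentially the same route as the paper: both verify the TGWD axioms and regularity quickly (the paper simply cites the hypothesis and Lemma \ref{lem.reg}), then split the consistency relations \eqref{eq.cons1}--\eqref{eq.cons2} into pure and mixed index cases, handling the mixed \eqref{eq.cons1} by expanding with \eqref{eq.rhosigma-2} and collapsing the four $q$-scalars via \eqref{eq.prod1-2} to $\eta_{ij}\eta_{ji}$, and the mixed \eqref{eq.cons2} by observing that each cross-action contributes a scalar that appears symmetrically on both sides. The paper displays the case $i>m$, $j\le m$ for \eqref{eq.cons1} and $i,k\le m$, $j>m$ for \eqref{eq.cons2}, while you choose the symmetric roles, but the computations are identical in substance.
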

\begin{proof}
By hypothesis, the $\pi_i$ are commuting automorphisms of $T$, and the $w_i$ are nonzero and central in $T$. Thus, $(T,\pi,w)$ is a TGWD. 
Lemma \ref{lem.reg} implies regularity. 

Since $A=A_\mu(R,\sigma,t)$ (resp. $B=B_{\nu}(S,\rho,h)$) is $\mu$-consistent (resp. $\nu$-consistent), then clearly $(T,\pi,w)$ satisfies \eqref{eq.cons1} with respect to $\eta$ for $i,j \leq m$ (resp. $i,j > m$). If $i > m$ and $j \leq m$, then by \eqref{eq.rhosigma-2},
\begin{align*}
\pi_i \pi_j (w_i w_j)
    &= \rho_{i-m} \sigma_j(t_j \tensor h_{i-m})
    = \sigma_j\rho_{i-m}(t_j) \tensor \rho_{i-m}\sigma_j(h_{i-m}) \\
    &=\sigma_j(q_{i-m,j}^{(+,-)}q_{i-m,j}^{(+,+)} t_j) \tensor \rho_{i-m}(q_{i-m,j}^{(-,-)}q_{i-m,j}^{(+,-)} h_{i-m}) \\
    &=\sigma_j(q_{i-m,j}^{(+,-)}q_{i-m,j}^{(+,+)}) \rho_{i-m}(q_{i-m,j}^{(-,-)}q_{i-m,j}^{(+,-)}) \pi_i(w_i)\pi_j(w_j) \\
    &= \left(q_{i-m,j}^{(-,+)}\right)\inv q_{i-m,j}^{(+,-)} \pi_i(w_i)\pi_j(w_j)
    = \eta_{ji}\eta_{ij} \pi_i(w_i)\pi_j(w_j).
\end{align*}
A similar argument shows that $\pi_i \pi_j (w_i w_j)=\eta_{ji}\eta_{ij} \pi_i(w_i)\pi_j(w_j)$ when $i \leq m$ and $j>m$. Thus, $(T,\pi,w)$ satisfies \eqref{eq.cons1} with respect to $\eta$. 

Now we check \eqref{eq.cons2}. Again, if $i,j,k \leq m$ (resp. $i,j,k > m$), then this follows from the consistency of $A$ (resp. $B$). Suppose $i,k \leq m$ and $j> m$, then we have
\begin{align*}
w_j\pi_i\pi_k(w_j)
&= h_{j-m} \sigma_i\sigma_k(h_{j-m})
= \left( q_{j-m,i}^{(-,-)}q_{j-m,i}^{(-,+)}h_{j-m}\right) \left(q_{j-m,k}^{(-,-)}q_{j-m,k}^{(-,+)}h_{j-m}\right) \\
&=\sigma_i(h_{j-m})\sigma_k(h_{j-m}) = \pi_i(w_j)\pi_k(w_j).
\end{align*}
The other checks are similar. Hence, since $(T,\pi,w)$ is regular, it is $\eta$-consistent.
\end{proof}



\begin{theorem}\label{thm.twtensor}
Assume Hypothesis \ref{hyp.tgwa} and \eqref{eq.rhosigma-2}, and keep the notation of Lemma \ref{lem.twtensor}.
Then $A \tensor_\tau B$ is a regular $\eta$-consistent TGWA of rank $m+n$.
\end{theorem}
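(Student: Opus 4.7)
My plan is to identify $A \tensor_\tau B$ with the TGWA $\cA_\eta(T,\pi,w)$ whose underlying TGWD is supplied by Lemma \ref{lem.twtensor}. Let $E = \cC_\eta(T,\pi,w)$ with canonical generators $Z_k^\pm$, $k \in [m+n]$, and let $\cJ_E$ denote its canonical ideal, so that $\cA_\eta(T,\pi,w) = E/\cJ_E$. I would first define a graded $\kk$-algebra homomorphism $\phi : E \to A \tensor_\tau B$ by sending $r \tensor s \in T$ to $r \tensor s$, $Z_k^\pm \mapsto X_k^\pm \tensor 1$ for $k \le m$, and $Z_k^\pm \mapsto 1 \tensor Y_{k-m}^\pm$ for $k > m$, and verify that $\phi$ respects the defining relations of $E$. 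The check splits into three cases: relations purely among the first $m$ generators reduce to the relations of $C$ via the algebra homomorphism $A \to A \tensor_\tau B$, $a \mapsto a \tensor 1$; analogously for the last $n$ generators and $B$; the cross relations $Z_k^+ Z_\ell^- = \eta_{k\ell} Z_\ell^- Z_k^+$ with $k \le m < \ell$ (and its siblings), as well as the commutation $Z_k^\pm (r \tensor s) = \pi_k^{\pm 1}(r \tensor s) Z_k^\pm$, reduce to short computations using the explicit formula \eqref{eq.TGWCtau} for $\tau$. The definitions \eqref{eq.piw}--\eqref{eq.taudef} are engineered exactly so that these identities hold, using \eqref{eq.rhosigma-2} and Lemma \ref{lem.aut_extend}.

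Next, because $\phi$ is graded and its degree-$\bzero$ component is the inclusion $T = R \tensor S \hookrightarrow A \tensor B$---the tensor product of the injections $R \hookrightarrow A$ and $S \hookrightarrow B$, which are injective by $\mu$- and $\nu$-consistency---the ideal $\ker\phi$ is graded with $\ker\phi \cap T = 0$. By the maximality in the definition of $\cJ_E$, we get $\ker\phi \subseteq \cJ_E$, yielding a graded surjection $q : A \tensor_\tau B \twoheadrightarrow \cA_\eta(T,\pi,w)$ that is the identity on $T$.

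The main obstacle is the reverse inclusion $\cJ_E \subseteq \ker\phi$, which will produce an inverse graded surjection $\bar\phi : \cA_\eta(T,\pi,w) \twoheadrightarrow A \tensor_\tau B$; since $\bar\phi$ and $q$ will agree with the identity on generators, one immediately deduces that they are mutually inverse graded isomorphisms. By the description \eqref{eq.ideal} of the canonical ideal, for any $c \in \cJ_E$ there exists $r \in T_{\reg} \cap \cnt(T)$ with $rc = 0$, whence $\phi(r)\phi(c) = 0$ in $A \tensor_\tau B$. It therefore suffices to show that $\phi(r)$ acts injectively by left multiplication on each homogeneous component $(A \tensor_\tau B)_{(\alpha',\alpha'')} = A_{\alpha'} \tensor B_{\alpha''}$. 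A direct computation from \eqref{eq.TGWCtau} shows that the left action of $r = \sum_i r_i \tensor s_i$ on this component is $a \tensor b \mapsto \sum_i r_i a \tensor \sigma^{-\alpha'}(s_i) b$; since $\id_R \tensor \sigma^{-\alpha'}$ is an automorphism of $T$ preserving regularity and centrality, one reduces to the standard action of an element of $T_{\reg} \cap \cnt(T)$ on $A_{\alpha'} \tensor B_{\alpha''}$, which is injective once each $A_{\alpha'}$ (resp.\ $B_{\alpha''}$) is flat, or merely torsion-free, as an $R$-module (resp.\ $S$-module)---the expected PBW-type property of regular, consistent TGWAs. Combining both inclusions gives $\ker\phi = \cJ_E$, and the regularity and $\eta$-consistency of the resulting TGWA of rank $m+n$ are exactly what Lemma \ref{lem.twtensor} provides.
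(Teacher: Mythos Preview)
Your architecture is the paper's: define the graded surjection $\phi$ from $E=\cC_\eta(T,\pi,w)$, verify the relations, and prove $\ker\phi=\cJ_E$ by establishing both inclusions. The inclusion $\ker\phi\subseteq\cJ_E$ is handled identically. For $\cJ_E\subseteq\ker\phi$ the paper does not use \eqref{eq.ideal}; instead it takes a homogeneous $c\in\cJ_E$ of degree $(g_1,g_2)$, left-multiplies $\phi(c)$ by a monomial $X_\bu\tensor Y_\bv$ of degree $(-g_1,-g_2)$, uses $(\cJ_E)_\bzero=0$ to obtain $(X_\bu\tensor Y_\bv)\cdot_\tau\phi(c)=0$, and then appeals to \cite[Theorem~4.3(iv)]{HO} (monomials in a regular TGWA are non-zero-divisors) to force $\phi(c)=0$. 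Your route via \eqref{eq.ideal} is a legitimate alternative, and both ultimately lean on the same structural input from \cite{HO}.

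There is one genuine slip. The claim that injectivity of left multiplication by $r\in T_{\reg}\cap\cnt(T)$ on $A_{\alpha'}\tensor_\kk B_{\alpha''}$ follows once $A_{\alpha'}$ and $B_{\alpha''}$ are ``merely torsion-free'' over $R$ and $S$ is not valid in general: torsion-freeness of the factors does not guarantee that an arbitrary regular element of $R\tensor S$ acts injectively on their tensor product. What you actually need---and what holds---is that each homogeneous component of a regular TGWA is \emph{free of rank one} over its base ring (this is part of \cite[Theorem~4.3]{HO}), so that $A_{\alpha'}\tensor_\kk B_{\alpha''}\cong T$ as a $T$-module and the desired injectivity is immediate. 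Replace the ``torsion-free'' clause by this freeness (or by flatness) and cite \cite{HO}; then your argument is complete.
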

\begin{proof}
By Lemma \ref{lem.twtensor}, $(T,\pi,w)$ is a regular, $\eta$-consistent TGWD of rank $m+n$. 
Let $\widehat{C}=\cC_\eta(T,\pi,w)$ be the TGWC associated to $(T,\pi,w)$ and $\eta$ with canonical ideal $\widehat{\cJ}$ and canonical generators $\{Z_i^{\pm}\}$.

Since $(T,\pi,w)$ is $\eta$-consistent, the degree zero component of $\widehat{C}$ is $T$. 
Define $\psi:\widehat{C} \to A \tensor_\tau B$ to be the additive and multiplicative map that is the identity on $T$ and maps 
\[
Z_i^{\pm} \mapsto \begin{cases}
X_i^{\pm} \tensor 1 & \text{if } i\leq m \\
1 \tensor Y_{i-m}^{\pm} & \text{if } i>m.
\end{cases}\]
The map $\psi$ is clearly surjective. To verify it is well defined, 
we check that $\psi$ respects the defining relations on $T$. The only nontrivial check is given below. Let $i \leq m$ and $j > m$. We have
\begin{align*}
\psi(Z_j^-Z_i^+)
    &= (1\otimes Y_{j-m}^-)\cdot_\tau(X_i^+\otimes 1)
    = (\mu_A\otimes \mu_B)(1\otimes \tau(Y_{j-m}^-\otimes X_i^+)\otimes 1) \\
    &= q_{j-m,i}^{(-,+)}(X_i^+\otimes Y_{j-m}^-) 
     = q_{j-m,i}^{(-,+)}(X_i^+ \tensor 1) \cdot_\tau (1 \tensor Y_{j-m}^-)
    = \eta_{ij}\inv \psi(Z_i^+ Z_j^-).
\end{align*}
A similar computation follows by the choice of $\eta_{ij}=q_{i-m,j}^{(+,-)}$ when $i > m$ and $j \leq m$.

Since $\psi$ is a $\ZZ^{m+n}$-graded map, then $\ker\psi$ is a graded ideal. Moreover, $\psi$ restricts to the identity on $T$, hence $(\ker\psi)_\bzero = \{0\}$ and $\ker\psi \subset \widehat{\cJ}$.

For the other direction, let $c \in \widehat{\cJ}$ be homogeneous of degree $g=(g_1,g_2)\in\ZZ^m\times\ZZ^n=\ZZ^{m+n}$ and write $\psi(c) = \sum_{i=1}^n a_i \tensor b_i$ such that the $\{a_i\}_i$ (and the $\{b_i\}_i$) are $\kk$-linearly independent. Because $\psi$ respects the grading on $\widehat{C}$ and $A \tensor_\tau B$, we may assume that the $a_i$ have degree $g_1$ while the $b_i$ have degree $g_2$. Choose $X_{\bu}\in A$ such that $d(\bu)=-g_1$, and $Y_\bv\in B$ such that $d(\bv)=-g_2$. Thus, $(X_\bu \tensor Y_\bv)\psi(c) \in T$. Taking a preimage $c^*\in\psi\inv(X_\bu \tensor Y_\bv)$ gives $c^* c \in T\cap\widehat{\cJ}=\{0\}$ and hence 
\begin{align*}
0&=\psi(c^*c)=(X_\bu \tensor Y_\bv)\cdot_\tau\psi(c) \\ 
&=(X_\bu \tensor Y_\bv)\cdot_\tau\sum_{i=1}^n a_i \tensor b_i = \sum_i X_\bu\rho^{d(\bv)}(a_i)\otimes \sigma^{-g_1}(Y_\bv)b_i.
\end{align*}
So, there is a $\kk$-relation amongst the $\{X_\bu\rho^{d(\bv)}(a_i)\}_i$ or the $\{\sigma^{-g_1}(Y_\bv)b_i\}_i$. Suppose we are in the first case and the second case is similar. Then for some $k_i \in \kk$, we have
$0=\sum_i k_iX_\bu\rho^{d(\bv)}(a_i)= X_\bu\rho^{d(\bv)}\left(\sum_i k_ia_i\right).$
Since $A$ is a regular TGWA, \cite[Theorem 4.3 (iv)]{HO} implies $0=\rho^{d(\bv)}\left(\sum_i k_ia_i\right)$, and so $0=\sum_i k_ia_i$, which, since $\{a_i\}_i$ were assumed to be linearly independent, means that $\psi(c)=0$.
Thus, $\ker\psi=\widehat{\cJ}$ and the induced map $\widehat{C}/\widehat{\cJ} \to A \tensor_\tau B$ is an isomorphism.
\end{proof}

The following now generalizes \cite[Corollary 2.17]{GR} and the proof is identical.

\begin{corollary}\label{cor.kfin}
Assume Hypothesis \ref{hyp.tgwa} and suppose further that $A$ and $B$ are $\kk$-finitistic. Let $\tau$ be the twisting map from \eqref{eq.TGWCtau} and let $\eta$ be as above. Then $A \tensor_\tau B$ is a regular $\eta$-consistent TGWA of rank $m+n$ which is again $\kk$-finitistic with Cartan matrix $C_{A\otimes_{\tau} B} = \mathrm{diag}(C_A,C_B)$.
\end{corollary}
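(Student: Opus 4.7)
The plan is to invoke Theorem \ref{thm.twtensor} for the TGWA structure, and then verify $\kk$-finitistic-ness together with the Cartan matrix identification by computing the spaces $V_{ij}$ for the TGWD $(T,\pi,w)$ of Lemma \ref{lem.twtensor} one index pair at a time. By Theorem \ref{thm.twtensor}, $A\tensor_\tau B$ is already a regular, $\eta$-consistent TGWA of rank $m+n$ with underlying datum $(T,\pi,w)$, so it suffices to understand $V_{ij} = \Span_\kk\{\pi_i^k(w_j) \mid k\in\ZZ\}$ and the minimal polynomial $p_{ij}$ of $\pi_i$ acting on it, for each $i,j \in [m+n]$.

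I would split into three cases based on the ranges of $i$ and $j$. If $i,j \leq m$, then $\pi_i^k(w_j) = \sigma_i^k(t_j)\tensor 1$, so the map $x \mapsto x \tensor 1$ identifies $V_{ij}$ with the corresponding $V$-space for $A$ and leaves the minimal polynomial unchanged; this yields the top-left $m\times m$ block of $C_{A\tensor_\tau B}$ equal to $C_A$. Symmetrically, for $i,j > m$ the bottom-right block is $C_B$. In the mixed case $i\leq m$ and $j>m$, Hypothesis \ref{hyp.tgwa} ensures $\pi_i = \sigma_i$ preserves $1\tensor S$, and condition \eqref{eq.rhosigma-2} forces $\sigma_i(h_{j-m}) = q_{j-m,i}^{(-,-)}q_{j-m,i}^{(+,-)}h_{j-m}$, i.e.\ a scalar multiple of $h_{j-m}$. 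Therefore $V_{ij} = \kk\cdot w_j$ is one-dimensional, so $\deg p_{ij} = 1$ and the Cartan entry becomes $1-1=0$. The case $i > m$ and $j \leq m$ is entirely analogous, using the $\sigma_j(h_i)$ identity of \eqref{eq.rhosigma-2} with the roles swapped. In particular $\dim_\kk V_{ij} < \infty$ in every case, so $A\tensor_\tau B$ is $\kk$-finitistic.

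The only mildly subtle point is the mixed case, where the key input is that \eqref{eq.rhosigma-2} provides a genuine scalar action of $\pi_i$ on $w_j$ (rather than merely preserving a possibly larger invariant subspace); this is what collapses the off-diagonal blocks of $C_{A\tensor_\tau B}$ to zero and delivers the block-diagonal structure. Everything else is a routine reading off of the two diagonal blocks from $C_A$ and $C_B$, which is why the proof mirrors that of \cite[Corollary 2.17]{GR} verbatim.
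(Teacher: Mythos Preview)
Your proof is correct and is precisely the argument the paper intends: it simply cites that the proof is identical to \cite[Corollary 2.17]{GR}, and that argument is exactly the case split you carry out, using \eqref{eq.rhosigma-2} to see that $\pi_i$ acts on $w_j$ by a scalar in the mixed cases so that the off-diagonal blocks vanish. One tiny wording quibble: in the case $i>m$, $j\le m$ you actually use the first identity $\rho_i(t_j)=q_{ij}^{(+,-)}q_{ij}^{(+,+)}t_j$ of \eqref{eq.rhosigma-2}, not the $\sigma_j(h_i)$ one; your phrase ``with the roles swapped'' should be read as switching to the other equation.
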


The following is a partial converse to Theorem \ref{thm.twtensor}.
In particular, this shows that the twisting maps defined in \eqref{eq.TGWCtau} are in some sense the \emph{only} graded twisting maps between two TGWAs in which the twisted tensor product is again a TGWA.

\begin{theorem}\label{thm.converse}
Let $A$ and $B$ be TGWAs satisfying Hypothesis \ref{hyp.tgwa}. Suppose that $\tau:B \tensor A \to A \tensor B$ is a graded twisting map such that $A \tensor_\tau B$ is a regular TGWA generated over $R \tensor S$ by the $X_j^{\pm} \tensor 1$ and $1 \tensor Y_i^{\pm}$.
Then the following hold:
\begin{enumerate}
    \item the $\sigma_j$'s and $\rho_i$'s extend to commuting automorphisms of $R \tensor S$, and
    \item $\tau$ satisfies \eqref{eq.TGWCtau} where the $q_{ij}^{(\pm,\pm)}$ are central units of $R \tensor S$ which are fixed by (the extensions of) $\sigma_j$ and $\rho_i$ for all $i,j$.
\end{enumerate}
\end{theorem}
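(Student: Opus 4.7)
The plan is to reverse-engineer the twisting map $\tau$ from the TGWA presentation of $A \tensor_\tau B$, running the argument of Theorem \ref{thm.twtensor} in reverse. Setting $Z_j^\pm := X_j^\pm \tensor 1$ for $j \in [m]$ and $Z_{m+i}^\pm := 1 \tensor Y_i^\pm$ for $i \in [n]$, the hypothesis presents $A \tensor_\tau B$ as $\cA_\eta(T,\pi,w)$ for some parameter matrix $\eta \in M_{m+n}(\kk^\times)$, commuting automorphisms $\pi_k$ of $T = R \tensor S$, and central elements $w_k \in T$.

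First I would extract the base-level data. The identity $Z_k^- Z_k^+ = w_k$ directly gives $w_j = t_j \tensor 1$ and $w_{m+i} = 1 \tensor h_i$. The relation $Z_k^+ z = \pi_k(z) Z_k^+$ evaluated on $z = r \tensor 1$ (for $k = j \leq m$) and on $z = 1 \tensor s$ (for $k = m+i$) yields $\pi_j(r \tensor 1) = \sigma_j(r) \tensor 1$ and $\pi_{m+i}(1 \tensor s) = 1 \tensor \rho_i(s)$, so that $\pi_j$ and $\pi_{m+i}$ restrict in the expected way to the natural subalgebras of $T$. The mixed-sign TGWA commutations $Z_{m+i}^\epsilon Z_j^{-\epsilon} = \eta_{m+i,j} Z_j^{-\epsilon} Z_{m+i}^\epsilon$ translate directly into $\tau(Y_i^\epsilon \tensor X_j^{-\epsilon}) = q_{ij}^{(\epsilon,-\epsilon)}(X_j^{-\epsilon} \tensor Y_i^\epsilon)$ with $q_{ij}^{(\epsilon,-\epsilon)} \in \kk^\times$.

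The main obstacle is that no TGWA relation directly constrains the same-sign products $Z_{m+i}^\epsilon Z_j^\epsilon$, so $\tau(Y_i^\epsilon \tensor X_j^\epsilon)$ must be pinned down indirectly. My plan here is to compute $Z_j^- Z_{m+i}^+ Z_j^+$ in $\cA_\eta(T,\pi,w)$ in two ways. Using $Z_j^- Z_{m+i}^+ = \eta_{m+i,j}\inv Z_{m+i}^+ Z_j^-$ and $Z_j^- Z_j^+ = t_j \tensor 1$, the product equals $\eta_{m+i,j}\inv \pi_{m+i}(t_j \tensor 1)(1 \tensor Y_i^+)$. Expanding instead $Z_j^- (Z_{m+i}^+ Z_j^+) = (X_j^- \tensor 1) \cdot \tau(Y_i^+ \tensor X_j^+)$ via the graded decomposition $\tau(Y_i^+ \tensor X_j^+) = \sum_k a_k X_j^+ \tensor b_k Y_i^+$ yields $\sum_k \sigma_j\inv(a_k) t_j \tensor b_k Y_i^+$. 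Equating the two expressions and cancelling the regular central element $t_j \tensor 1$ (Lemma \ref{lem.reg}) identifies $\pi_{m+i}(t_j \tensor 1) = \eta_{m+i,j}(t_j \tensor 1)(\sigma_j\inv \tensor \id)(\sum_k a_k \tensor b_k)$. Iterating this identity over all $j$ and combining with the consistency identities \eqref{eq.cons1}--\eqref{eq.cons2} applied to the new TGWD $(T,\pi,w)$ constrains $\sum_k a_k \tensor b_k$ to $\cnt(T)^\times$ and forces $\pi_{m+i}$ to preserve $R \tensor 1$. We then set $q_{ij}^{(+,+)} := \sum_k a_k \tensor b_k$ and obtain $\tau(Y_i^+ \tensor X_j^+) = q_{ij}^{(+,+)}(X_j^+ \tensor Y_i^+)$ with the desired central-unit and fixed-point properties. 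The $(-,-)$ case is handled symmetrically, replacing $Z_j^-$ with $Z_j^+$ in the computation, and yields $\pi_j(1 \tensor S) \subset 1 \tensor S$.

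Once these four families $q_{ij}^{(\pm,\pm)}$ and the preservation properties of $\pi_j, \pi_{m+i}$ are in hand, statement (1) follows because the $\pi_k$ commute pairwise in the TGWA. The full formula \eqref{eq.TGWCtau} on general $sY_\bv \tensor rX_\bu$ then follows by iterating the associativity of $\tau$ on generators and base elements, essentially running the computation of Lemma \ref{lem.assoc} in reverse.
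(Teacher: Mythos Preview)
Your outline matches the paper's proof through the easy parts: extracting the automorphisms $\pi_k$ from the TGWA relations, reading off $q_{ij}^{(+,-)},q_{ij}^{(-,+)}\in\kk^\times$ from the mixed-sign commutation relations, and recognising that $\tau(Y_i^+\tensor X_j^+)$ must have the form $q(X_j^+\tensor Y_i^+)$ for some $q\in R\tensor S$. The triple-product computation $Z_j^-Z_{m+i}^+Z_j^+$ that you propose is also correct and yields exactly the identity $\pi_{m+i}(t_j\tensor 1)=\eta_{m+i,j}(t_j\tensor 1)(\sigma_j\inv\tensor\id)(q_{ij}^{(+,+)})$; in fact this is equivalent to the relation \eqref{eq.rhosigma-2} that the paper invokes at the same point, and centrality and invertibility of $q_{ij}^{(+,+)}$ follow from it just as in the paper.

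The gap is in the next sentence. ``Iterating this identity over all $j$ and combining with the consistency identities \eqref{eq.cons1}--\eqref{eq.cons2}'' does not deliver what you need. Varying $j$ gives you a different $q_{ij}^{(+,+)}$ each time, not multiple constraints on a fixed one. The consistency identity \eqref{eq.cons2} applied to index triples such as $(m+k,j,m+i)$ or $(\ell,j,m+i)$ does yield $\rho_k$-invariance for $k\neq i$ and $\sigma_\ell$-invariance for $\ell\neq j$, but it cannot give you $\rho_i$-invariance of $q_{ij}^{(+,+)}$ for the \emph{same} $i$, nor $\sigma_j$-invariance for the \emph{same} $j$: the relevant instance of \eqref{eq.cons1} mixes in the unknown $\pi_j(1\tensor h_i)$ and so constrains nothing. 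The paper obtains these two remaining invariances by a separate computation, expanding $(1\tensor Y_i^+)(1\tensor Y_i^-)(X_j^+\tensor 1)$ and $(1\tensor Y_i^-)(1\tensor Y_i^+)(X_j^+\tensor 1)$ in two ways; your plan has no substitute for this step. Finally, your claim that the argument ``forces $\pi_{m+i}$ to preserve $R\tensor 1$'' is neither part of the theorem's conclusion nor a consequence of the identity you derived (which only concerns the elements $t_j\tensor 1$), and the paper makes no such claim.
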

\begin{proof}
(1) Let $r \tensor s \in R \tensor S$ and $j \in [n]$. Then $(1 \tensor Y_i)(r \tensor s) = \varrho_i(r \tensor s)(1 \tensor Y_i)$ for some automorphisms $\varrho_i$ of $R \tensor S$. By taking $r=1_R$, we see that $\varrho_i$ restricts to $\rho_i$ on $S$. A similar argument holds for $\sigma_j$. Then the extensions of $\sigma_j$ and $\rho_i$ commute by definition of a TGWA.

(2) As $\tau$ is a well-defined graded twisting map, then necessarily $\tau$ satisfies \eqref{eq.TGWCtau} for some $q_{ij}^{(\pm,\pm)} \in R \tensor S$, and we have \eqref{eq.rhosigma-2} (applied to $t_j\tensor 1$ and $1\tensor h_i$) and \eqref{eq.prod1-2}. By definition of a TGWA, $(1 \tensor Y_i^+)(X_j^- \tensor 1) = \eta (X_j^- \tensor 1)(1 \tensor Y_i^+)$ for some $\eta \in \kk^\times$ and, by regularity, $\eta = q_{ij}^{(+,-)}$. A similar argument shows that $q_{ij}^{(-,+)} \in \kk^\times$. Thus, all claims in (2) trivially apply to $q_{ij}^{(+,-)}$ and $q_{ij}^{(-,+)}$.

By \eqref{eq.rhosigma-2}, since $\rho_i$ is invertible, then $q_{ij}^{(+,-)}q_{ij}^{(+,+)}$ is a unit, which in turn implies that $q_{ij}^{(+,+)}$ is a unit since $q_{ij}^{(+,-)} \in \kk^\times$. Furthermore, $t_j \tensor 1$ is central in $R \tensor S$ and automorphisms preserve the center, so $q_{ij}^{(+,-)}q_{ij}^{(+,+)}$ is central. Thus, $q_{ij}^{(+,+)}$ is central.
Now we observe that for any $i,k \in [n]$, $i \neq k$, and $j \in [m]$,
\begin{align*}
\nu_{ik}\rho_i&(q_{kj}^{(-,+)})q_{ij}^{(+,+)}(X_j^+ \tensor Y_k^-Y_i^+) \\
    &= \rho_i\left(q_{kj}^{(-,+)}\right)q_{ij}^{(+,+)}(X_j^+ \tensor Y_i^+Y_k^-)
    = (1 \tensor Y_i^+)(1 \tensor Y_k^-)(X_j^+ \tensor 1) \\
    &= \nu_{ik}(1 \tensor Y_k^-)(1 \tensor Y_i^+)(X_j^+ \tensor 1)
    = \nu_{ik}\rho_k(q_{ij}^{(+,+)})q_{kj}^{(-,+)}(X_j^+ \tensor Y_k^-Y_i^+).
\end{align*}
Since $q_{kj}^{(-,+)} \in \kk^\times$, then by comparing coefficients we have $\rho_k(q_{ij}^{(+,+)})=q_{ij}^{(+,+)}$ for $i\neq k$. 
For $j\in [m]$, $i\in[n]$ we also have
\begin{align*}
q_{ij}^{(-,+)}&q_{ij}^{(+,+)}(X_j^+ \tensor \rho_i(h_i)) 
    =q_{ij}^{(-,+)}q_{ij}^{(+,+)}(X_j^+ \tensor Y_i^+Y_i^-) \\
    &=(1 \tensor Y_i^+) q_{ij}^{(-,+)}(X_j^+ \tensor 1)(1\tensor Y_i^-) 
    = (1 \tensor Y_i^+)(1 \tensor Y_i^-)(X_j^+ \tensor 1) \\
    &= (1\otimes \rho_i(h_i))(X_j^+ \tensor 1) 
    =  (X_j^+ \tensor 1)(1\tensor\sigma_j\inv\rho_i(h_i))\\
    &= (X_j^+\tensor 1)q_{ij}^{(-,+)}q_{ij}^{(+,+)}(1\tensor \rho_i(h_i))
    = q_{ij}^{(-,+)}\sigma_j(q_{ij}^{(+,+)})(X_j^+\tensor \rho_i(h_i))
\end{align*}
which shows that $\sigma_j(q_{ij}^{(+,+)})=q_{ij}^{(+,+)}$.
A similar computation but with the order of $Y_i^+$ and $Y_i^-$ reversed
now shows that
\[ 
q_{ij}^{(-,+)}\rho_i\inv(q_{ij}^{(+,+)})(X_j^+ \tensor h_i)
    = q_{ij}^{(-,+)}\sigma_j(q_{ij}^{(+,+)})(X_j^+\tensor h_i),
\]
which gives $\rho_i\inv(q_{ij}^{(+,+)})=\sigma_j(q_{ij}^{(+,+)})=q_{ij}^{(+,+)}$. Hence $q_{ij}^{(+,+)}$ is invariant under $\rho_k$ for all $k\in[m]$ and similar arguments show that it is also invariant under $\sigma_\ell$, $\ell\in[n]$, and analogously for $q_{ij}^{(-,-)}$, which proves that all the claims in (2) apply.
\end{proof}


We end this section with some examples. 
The second shows that it is possible to extend $\sigma$ and $\rho$ to commuting automorphisms of $R\otimes S$ non-trivially.

\begin{example}\label{ex.mpwa}    
(1) Let $\mathcal{A}$ be the algebra from Example \ref{ex.ttps}(3) with $n=2$.
For each $i$, let $A_i$ be the (rank one) GWA
$\kk[z_i](\sigma_i,a_i)$ with $\sigma_i(z_i)=q_i z_i+1$ and $a_i=z_i$. Let $x_i,y_i$ be the canonical generators of $A_i$ (so $x_i=X_i^+$ and $y_i=X_i^-$). Then
\[ x_iy_i - q_i y_ix_i = \sigma(a_i) - q_i a_i = (q_i z_i+1) - q_i z_i = 1.\]
Thus, $A_i \iso A_1^{q_i}(\kk)$, the first quantum Weyl algebra, and so $A_i$ is (trivially) a TGWA of Cartan type $A_1$.
We extend $\sigma_1$ and $\sigma_2$ trivially to automorphisms of $\kk[z_1,z_2]$.
For $i \neq j$, set 
$q_{ij}^{(+,+)} = \lambda_{ij}$,
$q_{ij}^{(-,+)} = \lambda_{ji}$,
$q_{ij}^{(+,-)} = \lambda_{ji}$, and
$q_{ij}^{(-,-)} = \lambda_{ij}$.
It is clear that these parameters satisfy \eqref{eq.prod1-2}.
Let $\tau$ be the twisting map defined in \eqref{eq.TGWCtau} (with $A=A_1$ and $B=A_2$). Then $\cA \iso A_1 \tensor_\tau A_2$. Inductively, we may obtain any algebra from Example \ref{ex.ttps}(3).

(2) Let $R=\kk[t_1,t_2]$. Define $\sigma_1,\sigma_2 \in \Aut(R)$ by
\[
\sigma_1(t_1) = t_1+t_2^2, \quad \sigma_1(t_2) = t_2, \quad
\sigma_2(t_1) = t_1, \quad \sigma_2(t_2) = -t_2.
\]
Let $A$ be the rank 2 TGWA defined by this data. Then $A$ is of Cartan type $(A_1)^2$ since the $V_{ij}$, $i \neq j$, are both 1-dimensional. It is not clear whether or not $A$ can be expressed as the twisted tensor product of two rank 1 GWAs.

Let $S=\kk[h]$ and define $\rho \in \Aut(S)$ by $\rho(h)= qh$ for some nonroot of unity $q \in \kk^\times$. This data defines a rank one GWA denoted B. 

We now construct a twisted tensor product of $A$ and $B$. Extend the automorphisms $\sigma_1$ and $\sigma_2$ to $R \tensor S$ trivially. On the other hand, extend $\rho$ to $R \tensor S$ by setting $\rho(t_1)=-t_1$ and $\rho(t_2)= i t_2$. Then $\rho$ and $\sigma$ commute. 
Thus, it is clear that $A \tensor_\tau B$ is (isomorphic to) a rank 3 TGWA of Cartan type $(A_1)^3$.
\end{example}

\section{Graded twists of TGWAs}

In this section we prove that the graded twist of a TGWA is again a TGWA. First we begin with a discussion of TGWCs.

\begin{hypothesis}\label{hyp.twist}
Keep Hypothesis \ref{hyp.tgwa}. For $\alpha \in \ZZ^m$, let $\gt_\alpha$ be a graded $\kk$-linear automorphism of $C$ such that $\gt:\ZZ^m\to \Aut_\kk(C)$ is a homomorphism and
\begin{align}\label{eq.twist}
\gt_{\be_i}(X_j^{\pm})=q_{ij}^{(+,\pm)} X_j^{\pm}, \quad
\gt_{-\be_i}(X_j^{\pm})=q_{ij}^{(-,\pm)} X_j^{\pm}, 
\qquad q_{ij}^{(\pm,\pm)} \in \kk^\times, i,j \in [m].
\end{align}
\end{hypothesis}

Note that, since $\chi_{-\be_i}=(\chi_{\be_i})\inv$, then $q_{ij}^{(-,+)}q_{ij}^{(+,+)} = q_{ij}^{(-,-)}q_{ij}^{(+,-)} = 1$.

As in Remark \ref{rem.qij}, the restriction that the scalars $q_{ij}^{(\pm,\pm)}$ in \eqref{eq.twist} are in $\kk^\times$ may be relaxed to assuming that they are central units of $R$ fixed by the $\sigma_i$. (However, we are most interested in the case that the graded twists $\gt$ correspond to cocycle twists and so we keep this hypothesis.)

By Remark \ref{rem:twist-system}, Hypothesis \ref{hyp.twist} implies that $\gt=\{\gt_\alpha \mid \alpha \in \ZZ^m\}$ is a (left and right) twisting system on $C$ and further that $\gt_\bzero=\id_C$. The action $\gt_\alpha$ on an arbitrary monomial can be computed using a similar formula as \eqref{eq.qdef}.

\begin{lemma}\label{lem.twist}
Assume Hypothesis \ref{hyp.twist}. Then the following hold:
\begin{enumerate}
    \item $\gt_\alpha \sigma_i = \sigma_i \gt_\alpha$ for each $i \in [m]$ and each $\alpha \in \ZZ^m$, and
    \item $\gt_{\pm\be_i}(t_j) = q_{ij}^{(\pm,+)}q_{ij}^{(\pm,-)} t_j$.
\end{enumerate}
\end{lemma}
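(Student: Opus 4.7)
For part (1), I would fix $i \in [m]$, $r \in R$, and apply the algebra automorphism $\gt_\alpha \in \Aut_\kk(C)$ to the defining relation $X_i^+ r = \sigma_i(r) X_i^+$ of $C$. The homomorphism property of $\gt$ together with \eqref{eq.twist} gives $\gt_\alpha(X_i^+) = c X_i^+$ for some $c \in \kk^\times$, and $\gt_\alpha$ being graded forces $\gt_\alpha(R) \subseteq R$, so the statement $\gt_\alpha\sigma_i = \sigma_i\gt_\alpha$ makes sense as an equality on $R$. Applying $\gt_\alpha$ to the relation and then invoking the original relation on the element $\gt_\alpha(r) \in R$ yields
\[
c \sigma_i(\gt_\alpha(r)) X_i^+ = c \gt_\alpha(\sigma_i(r)) X_i^+,
\]
so that $s X_i^+ = 0$ in $C$, where $s := \sigma_i(\gt_\alpha(r)) - \gt_\alpha(\sigma_i(r)) \in R$.

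The main obstacle is concluding from this that $s = 0$. I would multiply $s X_i^+ = 0$ on the left by $X_i^-$ and use $X_i^- s = \sigma_i^{-1}(s) X_i^-$ together with $X_i^- X_i^+ = t_i$ to obtain $\sigma_i^{-1}(s) t_i = 0$ in $C$. Because the TGWA $A=C/\cJ$ is $\mu$-consistent, the canonical map $R \to A$ is injective, and since it factors through $R \to C$, the latter is injective as well; hence the identity $\sigma_i^{-1}(s) t_i = 0$ also holds in $R$. By regularity, $t_i$ is a regular element of $R$, so $\sigma_i^{-1}(s) = 0$ and therefore $s = 0$, giving the desired commutation $\gt_\alpha \sigma_i = \sigma_i \gt_\alpha$ on $R$. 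The analogous computation using the relation $X_i^- r = \sigma_i^{-1}(r) X_i^-$ produces no new information, so (1) holds for every $i$ and every $\alpha$.

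For part (2), the argument is purely mechanical. Applying $\gt_{\be_i}$ to the relation $X_j^- X_j^+ = t_j$ in $C$ and using multiplicativity together with \eqref{eq.twist},
\[
\gt_{\be_i}(t_j) = \gt_{\be_i}(X_j^-) \gt_{\be_i}(X_j^+) = \bigl(q_{ij}^{(+,-)} X_j^-\bigr)\bigl(q_{ij}^{(+,+)} X_j^+\bigr) = q_{ij}^{(+,+)} q_{ij}^{(+,-)} t_j.
\]
The identical computation with $\gt_{-\be_i}$ in place of $\gt_{\be_i}$, using the scalars $q_{ij}^{(-,\pm)}$, yields $\gt_{-\be_i}(t_j) = q_{ij}^{(-,+)} q_{ij}^{(-,-)} t_j$, establishing (2).
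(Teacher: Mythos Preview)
Your proof is correct and follows essentially the same approach as the paper. For part~(1) the paper applies $\gt_{\be_j}$ to the relation $X_i^+r=\sigma_i(r)X_i^+$ just as you do, obtaining $q_{ji}^{(+,+)}\bigl(\sigma_i(\gt_{\be_j}(r))-\gt_{\be_j}(\sigma_i(r))\bigr)X_i^+=0$ and then concluding by regularity; you simply make that last step explicit by multiplying by $X_i^-$ and invoking injectivity of $R\to C$. Part~(2) is identical to the paper's computation.
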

\begin{proof}
(1) It suffices to prove this for $\alpha=\be_j$, $j \in [m]$. Let $r \in R$, then
\[ 0 = \chi_{\be_j}(X_i^+r - \sigma_i(r)X_i^+)
    = q_{ji}^{(+,+)} (\sigma_i(\chi_{\be_j}(r)) - \chi_{\be_j}(\sigma_i(r))X_i^+.
\]
Since $C$ is regular, then the result follows.

(2) 
For $i,j\in[m]$, $\chi_{\pm\be_i}(t_j)=\chi_{\pm\be_i}(X_j^-X_j^+)=q_{ij}^{(\pm,-)}q_{ij}^{(\pm,+)}t_j$. 
\end{proof}

The automorphisms $\gt_\alpha$ of $C$ defined in \eqref{eq.twist} clearly descend to automorphisms of $A$. Thus, $\gt$ satisfying Hypothesis \ref{hyp.twist} and \eqref{eq.twist} may equally be regarded as a twisting system of $A$.

\begin{theorem}\label{thm.TGWAtwist}
Assume Hypothesis \ref{hyp.twist} with $\gt$ defined as in \eqref{eq.twist}. 
\begin{enumerate}
    \item If $\nu_{ij} = \mu_{ij} q_{ji}^{(-,+)}q_{ij}^{(-,-)}$
    for all $i \neq j$, then $\ltC\iso C_\nu(R,\gt\inv\sigma,t)$ is a TGWC.
    \item If $\lt{\cJ}$ denotes the canonical ideal for $\ltC$, then $\cJ=\lt{\cJ}$ as sets.
    \item Then $\ltA \iso \ltC/\lt{\cJ}$. So, $\ltA\iso A_\nu(R,\gt\inv\sigma,t)$ is a rank $m$ TGWA.
\end{enumerate}
\end{theorem}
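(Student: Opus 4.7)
\emph{Plan.} I would prove the three parts in order, noting that (3) follows formally from (1) and (2). For (1), let $Z_i^\pm$ denote the canonical generators of $\cC_\nu(R, \gt\inv\sigma, t)$. Define a graded $\kk$-algebra map
\[
\phi: \cC_\nu(R, \gt\inv\sigma, t) \to \ltC, \qquad r \mapsto r,\quad Z_i^+ \mapsto q_{ii}^{(-,-)} X_i^+,\quad Z_i^- \mapsto X_i^-,
\]
and verify that each defining relation of the target TGWC holds in $\ltC$ under $\star$. The relations $Z_i^\pm r = (\gt\inv\sigma)_i^{\pm 1}(r) Z_i^\pm$ are immediate from the definition of $\star$ together with the commutation of $\gt_{\be_i}|_R$ and $\sigma_i$ from Lemma \ref{lem.twist}(1). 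The relation $Z_i^- Z_i^+ = t_i$ becomes $X_i^- \star q_{ii}^{(-,-)} X_i^+ = q_{ii}^{(-,-)} q_{ii}^{(+,-)} t_i$, which equals $t_i$ by the normalization $q_{ii}^{(-,-)} q_{ii}^{(+,-)} = 1$ noted after Hypothesis \ref{hyp.twist}; the relation $Z_i^+ Z_i^- = (\gt\inv\sigma)_i(t_i)$ is similar, using Lemma \ref{lem.twist}(2). For $i \neq j$, the off-diagonal relation unpacks to $q_{ji}^{(-,+)} \mu_{ij} = \nu_{ij} q_{ij}^{(+,-)}$, i.e., $\nu_{ij} = \mu_{ij} q_{ji}^{(-,+)} q_{ij}^{(-,-)}$, precisely the hypothesis. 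To see $\phi$ is bijective, I would construct an analogous $\psi: \ltC \to \cC_\nu(R, \gt\inv\sigma, t)$ by $X_i^+ \mapsto q_{ii}^{(+,-)} Z_i^+$, $X_i^- \mapsto Z_i^-$, $\psi|_R = \id$, and check that $\psi \circ \phi$ and $\phi \circ \psi$ act as the identity on generators.

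For (2), since $\ltC$ is also regular (same $t_i$), both $\cJ$ and $\lt\cJ$ are described by \eqref{eq.ideal} with respect to the appropriate products. Each $\gt_\alpha$ is a graded algebra automorphism of $C$ (Remark \ref{rem:twist-system}) preserving $R$, hence also $R_\reg \cap \cnt(R)$. For $c \in C_\gamma$ and $r \in R$, one has $r \star c = \gt_\gamma(r)c$; thus ``$r \star c = 0$ for some $r \in R_\reg \cap \cnt(R)$'' is equivalent to ``$r' c = 0$ for some $r' \in R_\reg \cap \cnt(R)$'' (via $r' = \gt_\gamma(r)$), so $\cJ = \lt\cJ$ as sets. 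For (3), the stability of $\cJ$ under $\gt$ lets the twisting system descend to $A$, and a direct identification of $\star$-products on cosets gives $\ltA = \ltC/\cJ = \ltC/\lt\cJ$. Since $\phi$ is a graded isomorphism fixing $R$, it identifies $\lt\cJ$ with the canonical ideal of $\cC_\nu(R, \gt\inv\sigma, t)$, yielding $\ltA \iso A_\nu(R, \gt\inv\sigma, t)$.

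The main obstacle is the bookkeeping in (1): the naive ``identity on generators'' map fails because $X_i^- \star X_i^+ = q_{ii}^{(+,-)} t_i \neq t_i$ in general, forcing the precise rescaling $Z_i^+ \mapsto q_{ii}^{(-,-)} X_i^+$ to absorb this scalar without perturbing $t_i$ in the target. Once this choice is made, the remaining verifications are direct applications of the reciprocal relations $q_{ij}^{(-,-)} q_{ij}^{(+,-)} = q_{ij}^{(-,+)} q_{ij}^{(+,+)} = 1$ and Lemma \ref{lem.twist}.
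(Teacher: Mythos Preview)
Your proposal is correct and follows essentially the same route as the paper. The only cosmetic difference in part (1) is that the paper absorbs the diagonal scalar into the \emph{minus} generator, taking $(X_i^-)'=q_{ii}^{(-,-)}X_i^-$ and keeping $X_i^+$, whereas you rescale the \emph{plus} generator via $Z_i^+\mapsto q_{ii}^{(-,-)}X_i^+$; both choices work for the same reason, namely $q_{ii}^{(-,-)}q_{ii}^{(+,-)}=1$. Your parts (2) and (3) match the paper's argument almost verbatim, and your explicit inverse $\psi$ is a slightly more direct way to certify bijectivity than the paper's implicit relations check.
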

\begin{proof}
(1) For generators in $\ltC$ we take $X_i^+$ and $(X_i^-)'=q_{ii}^{(-,-)}X_i^-$, $i\in [m]$. Then, for $i,j \in [m]$, $i \neq j$, we have in $\ltC$,
\begin{align*}X_i^+ \star (X_j^-)'&=\gt_{-\be_j}(X_i^+) (X_j^-)'=q_{ji}^{(-,+)}q_{ii}^{(-,-)} X_i^+ X_j^-=q_{ji}^{(-,+)}\mu_{ij}q_{ii}^{(-,-)}  X_j^-X_i^+ \\ &=q_{ji}^{(-,+)}\mu_{ij}\gt_{\be_i}\inv ((X_j^-)')\star X_i^+= \mu_{ij} q_{ji}^{(-,+)}q_{ij}^{(-,-)}(X_j^-)'\star X_i^+. \end{align*}
For $r \in R$, we have
\begin{align*}
X_i^+ \star r - \gt_{\be_i}\inv\sigma_i(r) \star X_i^+
    &= X_i^+ r - \sigma_i(r)X_i^+ = 0 \\
(X_i^-)' \star r - (\gt_{\be_i}\inv\sigma_i)\inv(r) \star (X_i^-)'
    &= q_{ii}^{(-,-)} X_i^- r -q_{ii}^{(-,-)} \sigma_i\inv(r)X_i^- = 0.
\end{align*}
Then,
\[ (X_i^-)' \star X_i^+ - t_i
 = q_{ii}^{(-,-)} \gt_{\be_i}(X_i^-) X_i^+ - t_i
    = X_i^- X_i^+ - t_i = 0.\]
Finally, applying Lemma \ref{lem.twist} we have,
\begin{align*}
X_i^+ \star (X_i^-)' - \chi_{\be_i}\inv \sigma_i(t_i)
    &= q_{ii}^{(-,-)} \gt_{-\be_i}(X_i^+) X_i^- - q_{ii}^{(-,+)}q_{ii}^{(-,-)} \sigma_i(t_i) \\
    &= q_{ii}^{(-,+)}q_{ii}^{(-,-)} (X_i^+X_i^- - \sigma_i(t_i)) = 0.
\end{align*}
This proves (1).

(2) Let $c \in \cJ$. Then there exists an $r\in Z(R) \cap R_{\text{reg}}$ such that $rc=0$. Since $\cJ$ is a graded ideal, we can assume that $c\in C_\alpha$ is homogeneous, hence $\chi_\alpha\inv(r) \star c = rc = 0$, with $\chi_\alpha\inv(r)\in Z(R) \cap R_{\text{reg}}$. Thus, $c \in \lt{\cJ}$ so ${\cJ} \subset \lt{\cJ}$. The converse follows by noting that ${}^{\gt\inv} \! (\lt{\cJ}) = {\cJ}$.

(3) Let $\pi:C \to C/\cJ = A$ be the quotient map. Since, as sets, $C=\ltC$, $\cJ=\lt{\cJ}$, and $A=\ltA$, then $\pi:\ltC \to \ltA$ is a $\kk$-linear surjection. The fact that it is an algebra homomorphism follows directly because $\cJ=\lt{\cJ}$ is a homogeneous ideal and hence the grading (and the twisting) on $C=\lt{C}$ is the same as the one on $A=\lt{A}$, so $\lt{A}\iso \lt{C}/\ker \pi=\lt{C}/\lt{\cJ}$. 
\end{proof}

\begin{corollary}\label{cor.twist-props}
Assume Hypothesis \ref{hyp.twist} with $\gt$ defined as in \eqref{eq.twist}. 
\begin{enumerate}
    \item If $A$ is regular, then $\ltA$ is regular.
    \item If $A$ is $\mu$-consistent, then $\ltA$ is $\nu$-consistent, with $\nu$ as in Thm \ref{thm.TGWAtwist}(1).
    \item If $A$ is $\kk$-finitistic, then $\ltA$ is $\kk$-finitistic of the same Cartan type.
\end{enumerate}
\end{corollary}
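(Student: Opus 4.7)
The plan is to leverage the isomorphism $\ltA \cong A_\nu(R, \gt\inv\sigma, t)$ from Theorem~\ref{thm.TGWAtwist}(3), so that each property can be verified directly against the new TGWD. For (1), this is immediate: the underlying base ring $R$ and the distinguished elements $t_i$ are unchanged, and regularity is precisely the condition $t_i \in R_\reg$, which persists. For (2), recall that $\mu$-consistency of $A$ is equivalent to $\cJ_\bzero = 0$; by Theorem~\ref{thm.TGWAtwist}(2), $\cJ = \lt\cJ$ as subsets of $C = \ltC$, and since $\gt$ respects the $\ZZ^m$-grading with $\gt_\bzero = \id$, the zero components of both $C$ and $\ltC$ coincide with $R$. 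Therefore $\lt\cJ_\bzero = \cJ_\bzero = 0$, so the map $R \to \ltA$ is injective, i.e., $\ltA$ is $\nu$-consistent.

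For (3), the crucial input is Lemma~\ref{lem.twist}: $\gt_{\be_i}$ commutes with $\sigma_i$ and acts on $t_j$ by the scalar $\lambda_{ij} := q_{ij}^{(+,+)} q_{ij}^{(+,-)} \in \kk^\times$. Consequently $\gt_{\be_i}$ acts on every $\sigma_i^k(t_j)$, and hence on all of $V_{ij}$, as multiplication by $\lambda_{ij}$. Restricted to $V_{ij}$ one therefore has $\gt_{\be_i}\inv\sigma_i = \lambda_{ij}\inv\sigma_i$, and iterating gives
\[
(\gt_{\be_i}\inv\sigma_i)^k(t_j) = \lambda_{ij}^{-k}\sigma_i^k(t_j) \qquad (k \in \ZZ).
\]
Hence the analogue of $V_{ij}$ computed for $\ltA$ coincides with $V_{ij}$ as a subspace of $R$, and so is finite-dimensional. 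For the Cartan type, multiplication of a linear operator by a nonzero scalar preserves the degree of its minimal polynomial: from a monic relation $\sum_{k=0}^d a_k \sigma_i^k = 0$ on $V_{ij}$ one obtains $\sum_{k=0}^d a_k \lambda_{ij}^k (\lambda_{ij}\inv\sigma_i)^k = 0$, a polynomial relation of the same degree for $\lambda_{ij}\inv\sigma_i$, and the substitution is reversible. Thus the minimal polynomial of $\gt_{\be_i}\inv\sigma_i$ on $V_{ij}$ has degree $\deg p_{ij}$, giving $C_{\ltA} = C_A$.

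The argument presents no serious obstacle. The subtlest step is recognizing, via Lemma~\ref{lem.twist}, that $\gt_{\be_i}$ acts on all of $V_{ij}$ by the single scalar $\lambda_{ij}$; once this scalar behavior is in hand, passing from $\sigma_i$ to $\gt_{\be_i}\inv\sigma_i$ amounts to a global rescaling, which manifestly preserves both the cyclic span of $t_j$ and the degree of its minimal polynomial.
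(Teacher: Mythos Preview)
Your arguments for (1) and (3) are correct and essentially match the paper's, with your treatment of (3) being more explicit than the paper's about why the degree of the minimal polynomial is preserved under the rescaling $\sigma_i \mapsto \lambda_{ij}^{-1}\sigma_i$.

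For (2), however, there is a genuine gap. You claim that $\mu$-consistency is equivalent to $\cJ_\bzero = 0$, but this is not so: by the very definition of $\cJ$ as the sum of all graded ideals with trivial degree-zero part, one always has $\cJ_\bzero = 0$, regardless of consistency. Consistency is the statement that the canonical map $R \to C$ (and hence $R \to A$) is injective, i.e., that $C_\bzero = R$; it is a condition on the TGWC, not on $\cJ$. Your argument as written is therefore vacuous. The fix is easy and in the spirit of your approach: since $C = \ltC$ as graded vector spaces, the inclusion $R \hookrightarrow C_\bzero$ coming from $\mu$-consistency is literally the same map as $R \to (\ltC)_\bzero$, so $\ltC$ is $\nu$-consistent; then $\lt\cJ_\bzero = 0$ (which holds automatically) passes this down to $\ltA$.

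By contrast, the paper does not argue via $\cJ = \lt\cJ$ at all for (2); it directly verifies the consistency equations \eqref{eq.cons1} and \eqref{eq.cons2} for the new data $(R,\gt^{-1}\sigma,t)$ and the new parameter matrix $\nu$, using Lemma~\ref{lem.twist} to reduce everything to the corresponding identities for $(R,\sigma,t,\mu)$. Your (corrected) route is shorter and more conceptual, while the paper's computation has the advantage of making the relationship between $\mu$ and $\nu$ explicit at the level of the consistency equations.
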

\begin{proof}
Since $\gt_\bzero=\id_A$ acts trivially on $R$, then (1) is clear.

For (2), suppose $A$ is $\mu$-consistent. Then by Lemma \ref{lem.twist},
\begin{align*}
\nu_{ij}&\nu_{ji}\chi_{\be_i}\inv\sigma_i(t_i)\chi_{\be_j}\inv\sigma_j(t_j) \\
    &= \mu_{ij} q_{ji}^{(-,+)}q_{ij}^{(-,-)}
        \mu_{ji} q_{ij}^{(-,+)}q_{ji}^{(-,-)}
		q_{ii}^{(-,+)}q_{ii}^{(-,-)}\sigma_i(t_i)
        q_{jj}^{(-,+)}q_{jj}^{(-,-)}\sigma_j(t_j) \\
	&= \left(q_{ji}^{(-,+)}q_{ij}^{(-,-)}\right)\left(q_{ij}^{(-,+)}q_{ji}^{(-,-)}\right)
		\left(q_{ii}^{(-,+)}q_{ii}^{(-,-)}\right)\left(q_{jj}^{(-,+)}q_{jj}^{(-,-)}\right)\sigma_i\sigma_j(t_it_j) \\
	&= (\chi_{\be_i}\inv\sigma_i)(\chi_{\be_j}\inv\sigma_j)(t_it_j)
\end{align*}
and
\begin{align*}
t_j\chi_{\be_i}\inv\sigma_i\chi_{\be_k}\inv\sigma_k(t_j) 
    &= t_j q_{ij}^{(-,+)}q_{ij}^{(-,-)}q_{kj}^{(-,+)}q_{kj}^{(-,-)}\sigma_i\sigma_k(t_j) \\
    &= q_{ij}^{(-,+)}q_{ij}^{(-,-)}q_{kj}^{(-,+)}q_{kj}^{(-,-)}\sigma_i(t_j)\sigma_k(t_j) 
    = \chi_{\be_i}\inv\sigma_i(t_j)\chi_{\be_k}\inv\sigma_k(t_j).
\end{align*}
This shows that $\ltA$ is $\nu$-consistent. 

(3) Since $\chi_{\be_i}\inv\sigma_i(t_j)$ is a nonzero scalar multiple of $\sigma_i(t_j)$, then
\[ \Span_\kk\{\sigma_i^k(t_j) ~|~ k \in \ZZ\}=\Span_\kk\{(\chi_{\be_i}\inv\sigma_i)^k(t_j) ~|~ k \in \ZZ\}\] 
and the result follows.
\end{proof}

The next result may be seen as a generalization of a known result in the context of multiparameter quantized Weyl algebras (see \cite{FH1,GYtwist}).

\begin{corollary}\label{cor.TGWAtwist}
Let $C=C_\mu(R,\sigma,t)$, and $A=A_\mu(R,\sigma,t)$ as in Hypothesis \ref{hyp.tgwa}, and let $C'=C_\nu(R,\sigma,t)$, $A'=A_\nu(R,\sigma,t)$. Then $C'\iso\ltC$ (resp. $A'\iso\ltA$), for some $\chi$ as in Hypothesis \ref{hyp.twist}, if and only if $\nu_{ij}\nu_{ji}=\mu_{ij}\mu_{ji}$ for all $i \neq j$.
\end{corollary}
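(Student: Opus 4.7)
The plan is to combine Theorem \ref{thm.TGWAtwist}(1)---which rewrites $\ltC$ as a TGWC $C_{\mu'}(R,\chi\inv\sigma,t)$ with $\mu'_{ij}=\mu_{ij}q_{ji}^{(-,+)}q_{ij}^{(-,-)}$---with the consistency criterion \eqref{eq.cons1}. Since $C$ is $\mu$-consistent, \eqref{eq.cons1} forces $\mu_{ij}\mu_{ji}=\sigma_i\sigma_j(t_it_j)/(\sigma_i(t_i)\sigma_j(t_j))$, and the same ratio equals $\nu_{ij}\nu_{ji}$ whenever $C'$ is $\nu$-consistent. So the equality $\nu_{ij}\nu_{ji}=\mu_{ij}\mu_{ji}$ will come for free from consistency of $C'$, and conversely one can read off exactly the scalars needed to set up a twist realizing $\nu$.

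For $(\Rightarrow)$, the core step is to show that an iso $C'\iso\ltC$ forces $C'$ to be $\nu$-consistent. Such an iso descends to $A'\iso\ltA$ via Theorem \ref{thm.TGWAtwist}(2); Corollary \ref{cor.twist-props}(2) gives that $\ltA$ is consistent, so $R$ embeds in $\ltA$, and transporting this embedding back through the iso yields an embedding $R\hookrightarrow A'$. Hence $C'$ is $\nu$-consistent. Equating the two instances of \eqref{eq.cons1} for $C$ and $C'$ and using regularity of the central element $\sigma_i(t_i)\sigma_j(t_j)$ yields $\nu_{ij}\nu_{ji}=\mu_{ij}\mu_{ji}$.

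For $(\Leftarrow)$, assume $\nu_{ij}\nu_{ji}=\mu_{ij}\mu_{ji}$. Since $(\nu_{ij}/\mu_{ij})(\nu_{ji}/\mu_{ji})=1$, pick $a_{ij}\in\kk^\times$ with $a_{ij}/a_{ji}=\nu_{ij}/\mu_{ij}$ for $i\neq j$ (e.g.\ $a_{ij}=\nu_{ij}/\mu_{ij}$ for $i<j$ and $a_{ij}=1$ otherwise) and $a_{ii}=1$. Set $q_{ij}^{(+,+)}=q_{ij}^{(-,-)}=a_{ij}$ and $q_{ij}^{(+,-)}=q_{ij}^{(-,+)}=a_{ij}\inv$, and define $\chi_{\be_i}\in\Aut_\kk(C)$ to fix $R$ pointwise and scale $X_j^\pm$ by $q_{ij}^{(+,\pm)}$. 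This is an algebra automorphism of $C$: the relations $X_j^\pm r-\sigma_j^\pm(r)X_j^\pm$ and $X_j^+X_k^--\mu_{jk}X_k^-X_j^+$ are scaled uniformly on each side, while $X_j^-X_j^+-t_j$ and $X_j^+X_j^--\sigma_j(t_j)$ are preserved precisely because $q_{ij}^{(+,+)}q_{ij}^{(+,-)}=1$. The $\chi_{\pm\be_i}$ mutually commute and satisfy $\chi_{\be_i}\chi_{-\be_i}=\id$, so they extend to a group homomorphism $\chi:\ZZ^m\to\Aut_\kk(C)$, and by Remark \ref{rem:twist-system} this is a twisting system. Applying Theorem \ref{thm.TGWAtwist}(1), $\mu'_{ij}=\mu_{ij}q_{ji}^{(-,+)}q_{ij}^{(-,-)}=\mu_{ij}a_{ji}\inv a_{ij}=\nu_{ij}$; and since $\chi|_R=\id$, we have $\chi_{\be_i}\inv\sigma_i=\sigma_i$ on $R$, whence $\ltC\iso C_\nu(R,\sigma,t)=C'$. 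The $A'$-statement follows from Theorem \ref{thm.TGWAtwist}(3). The main obstacle is verifying that the proposed $\chi$ is a well-defined algebra automorphism, for which the balance $q_{ij}^{(+,+)}q_{ij}^{(+,-)}=1$---forced precisely by requiring $\chi$ to fix $t_j$---is the essential input.
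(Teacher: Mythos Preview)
Your $(\Leftarrow)$ direction matches the paper's: both construct $\chi$ by choosing scalars satisfying $q_{ij}^{(+,+)}q_{ij}^{(+,-)}=q_{ij}^{(-,+)}q_{ij}^{(-,-)}=1$ (so that $\chi|_R=\id$) together with $\mu_{ij}q_{ji}^{(-,+)}q_{ij}^{(-,-)}=\nu_{ij}$. Your explicit choice of $a_{ij}$ and verification that $\chi_{\be_i}$ is an algebra automorphism simply fleshes out the paper's one-line ``we can define the needed $\chi$''.

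For $(\Rightarrow)$ you take a genuinely different route. The paper argues directly from Theorem~\ref{thm.TGWAtwist}(1): since $\ltC$ is a TGWC with automorphisms $\chi^{-1}\sigma$, identifying it with $C'=C_\nu(R,\sigma,t)$ forces $\chi_\alpha|_R=\id$; Lemma~\ref{lem.twist}(2) then gives $q_{ij}^{(-,+)}q_{ij}^{(-,-)}=1$, and multiplying $\nu_{ij}=\mu_{ij}q_{ji}^{(-,+)}q_{ij}^{(-,-)}$ by its $(j,i)$ counterpart yields $\nu_{ij}\nu_{ji}=\mu_{ij}\mu_{ji}$ in one line. You instead establish $\nu$-consistency of $C'$ and compare the two instances of \eqref{eq.cons1}. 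This is valid and arguably more conceptual, but more circuitous, and your invocation of Theorem~\ref{thm.TGWAtwist}(2) is not quite the right tool---that result says $\cJ=\lt{\cJ}$, not that $\cJ'$ corresponds to $\lt{\cJ}$. What actually makes the iso descend (and makes the transported $R\hookrightarrow A'$ coincide with the canonical map) is that the isomorphism $C'\iso\ltC$ is a graded $R$-algebra iso; this same assumption is implicit in the paper when it reads off $\chi^{-1}\sigma=\sigma$. The paper's approach buys a short direct computation; yours buys a viewpoint that generalizes to any invariant of the TGWD pinned down by \eqref{eq.cons1}.
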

\begin{proof}If $C'\iso \ltC$ for some $\chi$, by Theorem \ref{thm.TGWAtwist}(1) we have $C'=C_\nu(R,\sigma,t)\iso C_\nu(R,\gt\inv\sigma,t)=\ltC$, with 
$\nu_{ij} = \mu_{ij} q_{ji}^{(-,+)}q_{ij}^{(-,-)}$, in particular $\chi_\alpha=\id_R$ for all $\alpha$. Consequently, by Lemma \ref{lem.twist}(2),
\begin{align}\label{eq.qij-reln}
q_{ij}^{(+,+)}q_{ij}^{(+,-)}=q_{ij}^{(-,+)}q_{ij}^{(-,-)}=1 \quad\text{for all $i,j \in [m]$}.
\end{align}
Then
\[ \nu_{ij}\nu_{ji} 
	= \left(\mu_{ij} q_{ji}^{(-,+)}q_{ij}^{(-,-)}\right)\left(\mu_{ji} q_{ij}^{(-,+)}q_{ji}^{(-,-)}\right)
	= \mu_{ij}\mu_{ji}.
\]
Conversely, we can define the needed $\chi$ by choosing any $q_{ij}^{(\pm,\pm)}$ satisfying \eqref{eq.qij-reln}.
\end{proof}


\begin{example}\label{ex.A2}
Let $R=\kk[h]$. Define $\sigma_1,\sigma_2 \in \Aut(R)$ by $\sigma_1(h)=h+1$ and $\sigma_2(h)=h-1$. Set $t_1=h$ and $t_2=h+1$, and set $\mu_{12}=\mu_{21}=1$. Then $A=A_\mu(R,\sigma,t)$ is a $\kk$-finitistic TGWA of Cartan type $A_2$ with minimal polynomials $p_{12}=p_{21}=(x-1)^2$. By \cite[Theorem 5.27]{GR}, $A$ is noetherian. From \cite[Proposition 5.1]{ztwist}, $\ltA$ is also noetherian. By Theorem \ref{thm.TGWAtwist}, it follows that $A_\nu(R,\sigma,t)$
is noetherian provided $\nu_{12}\nu_{21}=1$.
\end{example}

\subsection{Twisted tensor products and graded twists}

Assume Hypothesis \ref{hyp.tgwa}, where the automorphisms $\sigma$ and $\rho$ are extended trivially to the other factor in $R\otimes S$. Since $\rho_i(t_j)=t_j$ and $\sigma_j(h_i)=h_i$ for all $i,j$, 
\eqref{eq.rhosigma-2} implies that 
\begin{equation}\label{eq:qij-inv}
 q_{ij}^{(+,-)}=\left(q_{ij}^{(+,+)}\right)\inv=\left(q_{ij}^{(-,-)}\right)\inv=q_{ij}^{(-,+)}.\end{equation}
We simplify notation and write $q_{ij}=q_{ij}^{(+,+)}=q_{ij}^{(-,-)}$, with $q_{ij}\inv=q_{ij}^{(+,-)}=q_{ij}^{(-,+)}$.

For each $(\alpha,\beta)\in \ZZ^m\times \ZZ^n$ we define $\gt_{(\alpha,\beta)}\in \Aut_{\kk}(A\otimes B)$ as follows:
\[\gt_{(\alpha,\beta)}(r\otimes s)=r\otimes s, ~ \gt_{(\alpha,\beta)}(X_j^{\pm}\otimes 1)=X_j^{\pm}\otimes 1,~\gt_{(\alpha,\beta)}(1\otimes Y_i^{\pm})=\prod_{j=1}^m (q_{ij}^{\pm 1})^{\alpha_j}  (1\otimes Y_i^{\pm}),\]
for all $r\in R$, $s\in S$, $j\in\{1,\ldots,m\}$, $i\in\{1,\ldots,n\}$.

It is clear that $\gt_{(\alpha+\alpha',\beta+\beta')}=\gt_{(\alpha,\beta)}\gt_{(\alpha',\beta')}$. Therefore $\gt:\Gamma\to \Aut_{\kk}(A\otimes B)$ is a homomorphism and $\gt=\{\gt_{(\alpha,\beta)}~|~(\alpha,\beta)\in\ZZ^m\otimes\ZZ^n\}$ is a twisting system on $A\otimes B$, considered as a $\Gamma$-graded algebra with $\Gamma=\ZZ^m\otimes \ZZ^n$.

\begin{proposition}\label{prop.twtw}
Assume that $A$ and $B$ are TGWAs satisfying Hypothesis \ref{hyp.tgwa}, where the automorphisms $\sigma$ and $\rho$ are extended trivially. Let $\gt$ be as above and $\tau$ as in \eqref{eq.TGWCtau}. Then $\lt(A \tensor B) \iso A \tensor_\tau B$, with the isomorphism given by $\id_{A\otimes B}$.
\end{proposition}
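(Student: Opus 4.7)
The plan is to show that the identity map on the vector space $A \otimes B$ is an algebra isomorphism from $\lt(A \otimes B)$ to $A \otimes_\tau B$. Both algebras have the same $\ZZ^{m+n}$-graded underlying vector space, so the identity is automatically a graded $\kk$-linear bijection; the entire content lies in verifying that it preserves multiplication.

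I would begin by making three reductions. First, the inclusions $a \mapsto a \otimes 1$ and $b \mapsto 1 \otimes b$ are algebra homomorphisms in either product: for $\cdot_\tau$ this is immediate from $\tau(1 \otimes a) = a \otimes 1$ and $\tau(b \otimes 1) = 1 \otimes b$, and for $\star$ it follows because $\gt_{(\alpha', 0)}$ fixes $A \otimes 1$ pointwise and $\gt_{(0, \beta')}$ fixes $1 \otimes B$ pointwise (the latter because the $\gt$-scalar $\prod_j (q_{ij}^{\pm 1})^{\alpha_j}$ is $1$ when $\alpha = 0$). Second, one checks immediately that $(a \otimes 1) \cdot (1 \otimes b) = a \otimes b$ in both algebras. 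Third, by associativity and bilinearity, a generic product $(a \otimes b) \cdot (a' \otimes b')$ equals $(a \otimes 1) \cdot [(1 \otimes b) \cdot (a' \otimes 1)] \cdot (1 \otimes b')$, so the only thing left to verify is agreement of the two multiplications on the cross products $(1 \otimes b) \cdot (a \otimes 1)$.

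For this key calculation, I would write $a = r X_\bu$ and $b = s Y_\bv$ with $d(\bu) = \alpha$ and $d(\bv) = \beta$. On the $\tau$-side, \eqref{eq.TGWCtau} gives
\[
(1 \otimes sY_\bv) \cdot_\tau (rX_\bu \otimes 1) = q_{(\bv, \bu)}(rX_\bu \otimes sY_\bv),
\]
using that $\rho^{d(\bv)}$ acts trivially on $r$ and $\sigma^{-d(\bu)}$ acts trivially on $s$ by the trivial-extension hypothesis. On the $\gt$-side, extending $\gt_{(\alpha, 0)}$ multiplicatively over $A \otimes B$ yields
\[
\gt_{(\alpha, 0)}(1 \otimes sY_\bv) = \Bigl(\prod_{i, j} q_{ij}^{\beta_i \alpha_j}\Bigr)(1 \otimes sY_\bv),
\]
so that $(1 \otimes sY_\bv) \star (rX_\bu \otimes 1)$ equals the same scalar times $rX_\bu \otimes sY_\bv$.

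The main (and really only) obstacle is then verifying the scalar identity $q_{(\bv, \bu)} = \prod_{i, j} q_{ij}^{\beta_i \alpha_j}$. This is where the trivial-extension hypothesis pays off: \eqref{eq:qij-inv} collapses each factor $q_{|v_k|, |u_\ell|}^{(\fs(v_k), \fs(u_\ell))}$ appearing in \eqref{eq.qdef} to the single-parameter form $q_{|v_k|, |u_\ell|}^{\fs(v_k) \fs(u_\ell)}$. Aggregating exponents via $\beta_i = \sum_{k: |v_k| = i} \fs(v_k)$ and $\alpha_j = \sum_{\ell: |u_\ell| = j} \fs(u_\ell)$ then matches the two scalars. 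This is routine sign-bookkeeping but it is the step that actually links the definition of $\tau$ to the construction of $\gt$.
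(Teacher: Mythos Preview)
Your proof is correct and follows essentially the same approach as the paper's. The paper computes the full product $(rX_\bu \otimes sY_\bv)*_\tau(r'X_{\bu'} \otimes s'Y_{\bv'})$ directly in a single chain of equalities, while you first reduce to the cross term $(1\otimes b)\cdot(a\otimes 1)$; but in both cases the entire content is the scalar identity $q_{(\bv,\bu)} = \prod_{i,j} q_{ij}^{\beta_i\alpha_j}$ coming from \eqref{eq:qij-inv}, which the paper leaves implicit and you spell out.
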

\begin{proof}
We use $\star_\gt$ (resp. $*_\tau$) to denote the multiplication in $\lt(A \tensor B)$ (resp. $A \tensor_\tau B$). It suffices to prove that, for any pair of elements $x,y\in A\otimes B$, $x\star_\gt y=x*_\tau y$, and by linearity it is enough to check it for homogeneous elements. Let $\bu,\bu' \in (\pm[m])^k$, $\bv, \bv' \in (\pm[n])^\ell$, $r,r' \in R$, and $s,s' \in S$. Then
\begin{align*}
    (rX_\bu &\tensor sY_\bv) *_\tau (r'X_{\bu'} \tensor s'Y_{\bv'} )
    = q_{(\bv,\bu')} r\sigma^{d(\bu)}(r')X_{\bu\bu'} \tensor s\rho^{d(\bv)}(s')Y_{\bv  \bv'} \\
    &= q_{(\bv,\bu')} \left(rX_\bu \tensor sY_\bv\right)\left(r'X_{\bu'} \tensor s'Y_{\bv'}\right)
    = \gt_{(\bu',\bv')}\left(rX_\bu \tensor sY_\bv\right)(r'X_{\bu'} \tensor s'Y_{\bv'}) \\
        &= (rX_\bu \tensor sY_\bv) \star_\chi (r'X_{\bu'} \tensor s'Y_{\bv'}).\qedhere
\end{align*}
\end{proof}


\section{Modules over twists}
We conclude by discussing Proposition \ref{prop.twtw} in the context of graded modules over twisted tensor products and graded twists of TGWAs. 
Unless otherwise stated, all modules are left modules.

Let $\Gamma$ be an additive abelian group with identity $\bzero$. If $A$ is a $\Gamma$-graded algebra, then an $A$-module $M$ is \emph{graded} if $M$ has a vector space decomposition $M=\bigoplus_{\gamma \in \Gamma} M_\gamma$ such that $A_\alpha M_\gamma = M_{\gamma+\alpha}$. Given a graded module $M$ and $\alpha \in \Gamma$, the \emph{graded shift} $M\langle \alpha \rangle$ is a graded module $M\langle \alpha \rangle=\bigoplus_{\gamma \in \Gamma} M\langle \alpha \rangle_\gamma$ where $M\langle \alpha \rangle_\gamma=M_{\gamma-\alpha}$. The category $\Gr_\Gamma A$ consists of graded $A$-modules and homomorphisms $\phi:M \to N$ satisfying $\phi(M_\gamma) \subset N_{\gamma}$ for all $\gamma \in \Gamma$. 

Let $M\in\Gr_\Gamma A$, and let $\chi$ be a twisting system of $A$, then we define $\lt{M}\in\Gr_\Gamma \ltA$ to be the same as $M$ as a vector space, with action given by $a \cdot w = \gt_\gamma(a)w$ for $a \in \ltA$, $w \in M_\gamma$. This gives an equivalence of categories between $\Gr_\Gamma A$ and $\Gr_\Gamma \ltA$, which is the identity on vector spaces and on morphisms, see \cite[Theorem 3.1]{ztwist}.

\subsection{Modules over twisted tensor products}

Here we assume Hypothesis \ref{hyp.tgwa}. Let $M$ be a graded $A$-module and $N$ a graded $B$-module. Then $M \tensor N$ is naturally a graded $(A \tensor B)$-module. Following \cite{cap}, we show how we may consider extending this structure to the twisted tensor products defined above.


Let $\tau:B \tensor A \to A \tensor B$ be as in \eqref{eq.TGWCtau}. Let $\lambda_A$ and $\lambda_B$ denote the actions of $A$ and $B$ on $M$ and $N$, respectively. An \emph{exchange map} for the pair $(B,M)$ is a $\kk$-linear map $\tau_M: B \tensor M \to M \tensor B$ such that $\tau_M(1_B \tensor w) = w \tensor 1_B$ for all $w \in M$. We can define an action $\tau_{\lambda_M}: A \tensor B \tensor M \tensor N \to M \tensor N$ by
\[ \tau_{\lambda_M} = (\lambda_A \tensor \lambda_B) \circ (\id_A \tensor \tau_M \tensor N).\]
Then $M \tensor N$ is an $(A \tensor_\tau B)$-module if and only if the following diagram commutes:
\[ 
\adjustbox{scale=.95,center}{%
\begin{tikzcd}
& B \tensor A \tensor B \tensor M \ar[r, "\tau \tensor \tau_M"] 
    & A \tensor B \tensor M \tensor B \ar[dr,"\id_A \tensor \tau_M \tensor \id_B"] \\
B \tensor B \tensor A \tensor M \arrow[ur, "\id_B \tensor \tau \tensor \id_M"]
    \arrow[r, swap, "\mu_B \tensor \lambda_A"]
& B \tensor M \arrow[r, "\tau_M"] & M \tensor B & A \tensor M \tensor B \tensor B
    \arrow[l, "\lambda_A \tensor \mu_B"]
\end{tikzcd}}
\]
This diagram matches the one for $\tau$ when $M=A$, $\lambda_A=\mu_A$, and $\tau_M = \tau$ (that is, by considering the action of $A$ on itself).
\begin{proposition}\label{prop.twmod}
Assume Hypothesis \ref{hyp.tgwa} where the automorphisms $\sigma$ and $\tau$ are extended trivially. Let $M$ and $N$ be graded modules over $A$ and $B$, respectively.
Define the scalars $q_{(\bv,\bu)}$ as in \eqref{eq.qdef}. 
For $w \in M_\alpha$ and $sY_\bv \in B$, set
\begin{align}\label{eq.taumod}
\tau_M(sY_\bv \tensor w) = q_{(\bv,\bu)} w \tensor sY_\bv.
\end{align}
Where $\bu\in(\pm [m])^k$ is any sequence such that $d(\bu)=\alpha$. Then $\tau_M$ is a twisting map and makes $M \tensor N$ into a $A \tensor_\tau B$ module.
\end{proposition}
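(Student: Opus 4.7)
The plan is to verify that \eqref{eq.taumod} defines an exchange map for the pair $(B,M)$ and then to check the diagram that makes $M \tensor N$ into an $(A \tensor_\tau B)$-module. The first step, and the main technical obstacle, is well-definedness: the right-hand side of \eqref{eq.taumod} depends a priori on the choice of $\bu$ with $d(\bu)=\alpha$, so I must show that only $\bv$ and $\alpha$ really matter. Because $\sigma$ and $\rho$ are extended trivially, the relations \eqref{eq:qij-inv} hold, which unpack to $q_{ij}^{(+,+)} q_{ij}^{(+,-)} = q_{ij}^{(-,+)} q_{ij}^{(-,-)} = 1$. For each $v_i \in \bv$ and each $\ell \in [m]$ this gives $q_{v_i, +\ell} \cdot q_{v_i, -\ell} = 1$, so inserting or deleting a canceling pair $(+\ell, -\ell)$ from $\bu$ leaves $q_{(\bv, \bu)}$ unchanged. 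Combined with the obvious invariance of the product under permutations of the entries of $\bu$, this shows that $q_{(\bv, \bu)}$ depends only on $\bv$ and $\alpha = d(\bu)$.

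Granted well-definedness, taking $\bv$ empty gives $q_{(\emptyset, \bu)} = 1$, so $\tau_M(1_B \tensor w) = w \tensor 1_B$ and $\tau_M$ is an exchange map in the sense of Section \ref{sec.twistedtensor}. Acting by $1_A \tensor 1_B$ on $m \tensor n$ then trivially returns $m \tensor n$, so the only remaining module axiom to verify is associativity, i.e., the commutative diagram displayed just before the proposition.

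For that step I would apply both composites in the diagram to a generic element $sY_\bv \tensor s'Y_{\bv'} \tensor rX_\bu \tensor w$ with $w \in M_\alpha$, fixing once and for all a sequence $\bu^{(0)}$ with $d(\bu^{(0)}) = \alpha$. The trivial extension of $\rho$ and $\sigma$ collapses each ``$\rho^{d(\bv)}(r)$'' and ``$\sigma^{-d(\bu)}(s)$'' factor produced by $\tau$ back to $r$ and $s$, while each application of $\tau$ or $\tau_M$ contributes exactly one scalar of the form $q_{(\cdot,\cdot)}$. The computation then mirrors the proof of Lemma \ref{lem.assoc}, with $w$ playing the role of $r'X_{\bu'}$ and $\bu^{(0)}$ playing the role of $\bu'$; invoking \eqref{eq.pcom} repeatedly, the scalars produced by both composites collapse to the common value $q_{(\bv\bv', \bu\bu^{(0)})}$, and the underlying elements of $M \tensor B$ agree by the ordinary $A$-module axiom for $M$. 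This establishes the diagram and hence makes $M \tensor N$ a left $(A \tensor_\tau B)$-module as claimed.
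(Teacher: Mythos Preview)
Your proposal is correct and follows essentially the same approach as the paper: the paper's proof simply states that well-definedness of $\tau_M$ follows from \eqref{eq:qij-inv} and that the rest is analogous to Lemma \ref{lem.assoc}, which is exactly what you have spelled out in detail. Your explicit verification that $q_{v_i,+\ell}\,q_{v_i,-\ell}=1$ for every $v_i\in\bv$ and $\ell\in[m]$, and hence that $q_{(\bv,\bu)}$ depends only on $d(\bu)$, is precisely the content the paper leaves implicit.
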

\begin{proof}
The fact that the definition of $\tau_M$ does not depend on the choice of $\bu$ follows from \eqref{eq:qij-inv}. The rest of the proof is analogous to Lemma \ref{lem.assoc}. 
\end{proof}

\subsection{Relating graded module categories} 
Keep the notation of Proposition \ref{prop.twtw} and let $\Gamma=\ZZ^{m+n}$. Applying \cite[Theorem 3.1]{ztwist}, we have
\begin{align}\label{eq.grmod}
\Gr_\Gamma A \tensor B \equiv \Gr_\Gamma \lt(A \tensor B) \equiv \Gr_\Gamma A \tensor_\tau B.
\end{align}
The action in the case of the graded twist is the standard one and the corresponding action on the twisted tensor product is the one given in Proposition \ref{prop.twmod}.

\begin{example}\label{ex.weyl}
Suppose that $\kk$ is algebraically closed.
The $n$th Weyl algebra $A_n(\kk)$ is a TGWA of Cartan type $(A_1)^n$ over $R=\kk[t_1,\hdots,t_n]$ with $\sigma_i(t_j)=t_j-\delta_{ij}$ and $\mu_{ij}=1$ for all $i,j \in [n]$.


If $\bu \in (\pm[n])^k$, then 
$[t_i,X_{\bu}]=d(\bu)_i X_{\bu}$ for any $i$. Thus, the grading on $A_n(\kk)$ is given by commuting with the $t_i$. Said another way, a graded left $A_n(\kk)$-module is a $(A_n(\kk),R)$-bimodule. 
The right $R$-module structure is given by $w \cdot r = \sigma^{\alpha}(r)w$ for any $r \in R$, $w \in M_\alpha$.
Hence, the graded $A_n(\kk)$-modules are those that decompose as a sum of weight subspaces over $R$. By \cite[Section 5.3]{BBF}, up to ungraded isomorphism, the simple $A_n(\kk)$-weight modules are isomorphic to a tensor product of $A_1(\kk)$-weight modules.

Let $N$ be a simple weight module of $A_n(\kk)$, so $N \iso N_1 \tensor \hdots \tensor N_n$ for $A_1(\kk)$-weight modules $N_i$. The graded isomorphism classes of $N$ are obtained by shifts. So if $\alpha = (\alpha_i) \in \ZZ^n$, then $N\langle \alpha \rangle \iso N_1\langle \alpha_1 \rangle \tensor \cdots \tensor N_n\langle \alpha_n \rangle$. That is, up to isomorphism, the graded simple $A_n(\kk)$-modules are tensor products of graded simple $A_1(\kk)$-modules. 
Define the $A=A_1(\kk)$-modules
\[ U=A/AX_1^+, \quad V=(A/AX_1^-)\langle 1 \rangle, \quad M_\lambda = A/A(t_1+\lambda)\] 
for $\lambda \in \kk\backslash\ZZ$. By \cite[Lemma 4.1]{sierra}, the graded simple $A_1(\kk)$-modules are (up to isomorphism) the $M_\lambda$ along with the shifts $U\langle n \rangle$ and $V\langle n\rangle$ for $n \in \ZZ$.

By an inductive version of \eqref{eq.grmod}, this also describes the graded simple modules of graded twists of $A_n(\kk)$, as well as graded twisted tensor products of copies of $A_1(\kk)$ where the automorphisms are always extended trivially. In this way one obtains the simple graded modules of the algebras in Example \ref{ex.mpwa}.
\end{example}

\section*{Acknowledgment}
D.R. was supported in this work by a Grant-in-Aid of Research and a Summer Faculty Fellowship from Indiana University Northwest. The authors thank Robert Won for helpful conversations related to this project.


\end{document}